\newtheorem{corollary}{Corollary}
\newtheorem{definition}{Definition}
\newtheorem{proposition}{Proposition}
\newtheorem{remark}{Remark}
\author{Carlo Pandiscia}
\title{Factorization of stochastic maps using the Stinespring representations}
\begin{document}

\maketitle

\begin {abstract}
In this work, we investigate the existence of a factorization  for a unital completely positive map, between non-commutative probability space  which do not change the expectation values of the events. These  maps  are called in literature stochastic maps. Using the Stinespring representations of completely positive map  and assuming the existence of anti-unitary operator on Hilbert space related to these representations which satisfying some modular relations, we prove that stochastic maps with adjoint, admits a factorization. 
\end {abstract}

\section{Introduction}

We study the existence of a factorization for a preserving Markov operators,
question posed and debated by Anantharaman-Delaroche in \cite{Antha}. The factorization problem is closely connected with the existence of a reversible dilation of a quantum dynamical system in the sense of Kummerer \cite{Kummerer}, since Haagerup and Musat they proved in \cite{Haag-Musat} that the factorization property is equivalent to the existence of a reversible dilation of quantum dynamical system.
\\
The preserving Markov operator between commutative probability spaces are factorizable and each deterministic map \textit{i.e.} multiplicative preserving Markov operator between generic probability space have this property since they admits a reversible dilation (see e.g. \cite{Haag-Musat}, \cite{Kummerer}, \cite{panda2}). 
\\
In this paper, we give a constructive methods for to determine a factorization of preserving Markov map using the Stinespring representations and assuming the existence of anti-unitary operator $\hat{J}$ on Hilbert space of the Stinespring representation. In commutative and deterministic case we have a natural choice for  the anti-unitary operator $\widehat J $ which happens to be a conjugation.
\\
We want to underline that the problem of to establish when a map admits a factorization and hence a reversible dilation, is a problem that still remains largely open,  although it was posed many years ago \cite{Kummerer}.
\\
this paper is organized as follows:
\\
In section 2 we recall the main definitions and results on completely positive maps and quantum dynamical systems that can be found in \cite{Brown-Ozawa}, \cite{NSZ} and \cite{Paulsen}. 
\\
In section 3 Using the Stinespring representaions related to preserving Markov operator $\Phi$ and assuming the  existence of particular anti-unitary operator on Hilbert space of Stinespring representation, we prove that  $\Phi$ admits a factorization.
\\
In section 4, After that we have introduced briefly  the notion of the generalized conditional expectation of Accardi and Cecchini, which the reader can found in  \cite{Accardi-Cecchini} and \cite{Cecchini},  we prove that if this map is a Umegaki conditional expectation \cite{Umegaki}, \cite{Cuculescu}, then the preserving Markov operator admits a factorization.
  
\section{Preliminaries}
In this paper we consider the  probability spaces $(\mathfrak{M},\varphi)$ constituted by a von Neumann algebra  $\mathfrak{M}$ and by its normal faithful state $\varphi$. 
\\
Let $(\mathfrak{M},\varphi)$ be a probability space, we set with $( \mathcal{H}_{\varphi},\pi_{\varphi}, \Omega_{\varphi})$ the GNS representation of the normal state $\varphi$ and with $(J_{\varphi},\Delta_{\varphi})$ the modular operators associated with the von Neumann algebra $\pi_{\varphi}(\mathfrak{M})$ and with $\left\{\sigma_{t}^{\varphi} \right\}$ its modular group.
\\
Furthermore we set with $\mathfrak B(\mathcal H)$ the von Neumann algebra of bounded operators on Hilbert space $\mathcal H$.
\\
A \textbf{stochastic map} $\Phi:(\mathfrak M_1  ,\varphi_1)\rightarrow (\mathfrak M_2 ,\varphi_2)$ between probability space $\left\{\mathfrak M_i ,\varphi_i\right\}$  with $i=1,2$, is a normal unital completely positive map $\Phi:\mathfrak M_1\rightarrow\mathfrak M_2$ with the following property $\varphi_2\circ \Phi=\varphi_1$. 
\\
We have a normal unital completely positive map $\Phi_{\bullet}:\pi_{\varphi_1}(\mathfrak M_1)\rightarrow \pi_{\varphi_2}(\mathfrak M_2)$ such that
\[
\Phi_{\bullet}(\pi_{\varphi}(A))=\pi_{\omega}(\Phi(A))
\]
for all $A\in\mathfrak M_1$.
\\
Moreover, there is a linear contraction $U_{\Phi}:\mathcal{H}_{\varphi_1}\rightarrow \mathcal{H}_{\varphi_2}$ defined as
\begin{equation}
\label{lincontr}
U_{\Phi}\pi_{\varphi_1}(A) \Omega_{\varphi_1}=\pi_{\varphi_2}(\Phi(A)) \Omega_{\varphi_2} 
\end{equation}
for all $A\in\mathfrak M_1$.
\\
Furthermore the  Stochastic map $\Phi$ admits a $(\varphi_1,\varphi_2)-$adjoint if there is a stochastic map $\Phi^{\sharp}:(\mathfrak M_2 , \varphi_2)\rightarrow(\mathfrak M_1 , \varphi_1)$ such that
\[
\varphi_2 (B \ \Phi(A))=\varphi_1(\Phi^{\sharp}(B) \ A))
\]
for all $A\in\mathfrak M_1$ and $ B\in\mathfrak M_2$.
\\
A stochastic map $\Phi$ between two probability spaces is said be a \textbf{deterministic map} whether is  a homomorphism of von Neumann algebras.
\\
We have a fundamental proposition (see \cite{Accardi-Cecchini} and \cite{NSZ}):
\begin{proposition}
Let $\Phi:(\mathfrak M_1 , \varphi_1 )\rightarrow (\mathfrak M_2 , \varphi_2)$ be a stochastic map, the following conditions are equivalent:
\begin{itemize}
\item $\Phi$ admits $(\varphi_1,\varphi_2)-$adjoint
\item $\Phi_{\bullet}\circ\sigma_{t}^{\varphi_1}=\sigma_{t}^{\varphi_2}\circ\Phi_{\bullet}  \qquad  t\in\mathbb{R}$ 
\item $J_{\varphi_2} U_{\Phi}= U_{\Phi}J_{\varphi_1}$.
\end{itemize}
\end{proposition}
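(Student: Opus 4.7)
I would prove the chain (1) $\Leftrightarrow$ (2) $\Leftrightarrow$ (3), with the universal preliminary being that $U_\Phi$ intertwines the Tomita $S$-operators $S_{\varphi_i}=J_{\varphi_i}\Delta_{\varphi_i}^{1/2}$. Indeed, $\Phi(1)=1$ together with \eqref{lincontr} gives $U_\Phi\Omega_{\varphi_1}=\Omega_{\varphi_2}$, and since a unital completely positive $\Phi$ is $\ast$-preserving, the computation
\[
U_\Phi S_{\varphi_1}\pi_{\varphi_1}(A)\Omega_{\varphi_1}=U_\Phi\pi_{\varphi_1}(A^*)\Omega_{\varphi_1}=\pi_{\varphi_2}(\Phi(A)^*)\Omega_{\varphi_2}=S_{\varphi_2}U_\Phi\pi_{\varphi_1}(A)\Omega_{\varphi_1}
\]
yields $U_\Phi S_{\varphi_1}\subseteq S_{\varphi_2}U_\Phi$ by closure. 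This identity is the conduit between the modular conjugation $J$ and the modular operator $\Delta^{1/2}$.

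For (2) $\Leftrightarrow$ (3), I would evaluate (2) on the cyclic vector $\pi_{\varphi_1}(A)\Omega_{\varphi_1}$ and use $\sigma_t^{\varphi_i}=\mathrm{Ad}\,\Delta_{\varphi_i}^{it}$ together with $\Delta_{\varphi_i}^{it}\Omega_{\varphi_i}=\Omega_{\varphi_i}$ to show that (2) is equivalent to $U_\Phi\Delta_{\varphi_1}^{it}=\Delta_{\varphi_2}^{it}U_\Phi$ for all $t\in\mathbb R$. The functional calculus for positive self-adjoint operators translates this in turn into $U_\Phi\Delta_{\varphi_1}^{1/2}\subseteq\Delta_{\varphi_2}^{1/2}U_\Phi$. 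Combining with the $S$-intertwining above, one direction cancels $\Delta_{\varphi_2}^{1/2}$ on the dense range $\mathrm{Ran}\,\Delta_{\varphi_1}^{1/2}$ to extract $U_\Phi J_{\varphi_1}=J_{\varphi_2}U_\Phi$, while the converse cancels $J_{\varphi_2}$ (antilinear but injective) from the identity $J_{\varphi_2}U_\Phi\Delta_{\varphi_1}^{1/2}\xi=J_{\varphi_2}\Delta_{\varphi_2}^{1/2}U_\Phi\xi$ to recover the $\Delta^{1/2}$-intertwining and hence (2).

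For (1) $\Leftrightarrow$ (2) I would follow the KMS-analyticity argument of Accardi and Cecchini \cite{Accardi-Cecchini}. Given the adjoint $\Phi^\sharp$, for $A\in\mathfrak M_1$, $B\in\mathfrak M_2$ analytic for their respective modular groups, the functions $z\mapsto\varphi_2(B\Phi(\sigma_z^{\varphi_1}(A)))$ and $z\mapsto\varphi_2(\sigma_z^{\varphi_2}(\Phi(A))B)$ admit entire continuations which, after rewriting via the duality and applying the KMS condition in each probability space, must agree by uniqueness of the modular group, forcing $\Phi\circ\sigma_t^{\varphi_1}=\sigma_t^{\varphi_2}\circ\Phi$. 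Conversely, given (2), for each $B\in\mathfrak M_2$ the normal functional $\omega_B(A)=\varphi_2(B\Phi(A))$ on $\mathfrak M_1$ is $\sigma^{\varphi_1}$-covariant in the sense required by the noncommutative Radon--Nikodym theorem, producing a unique $\Phi^\sharp(B)\in\mathfrak M_1$ with $\omega_B(\cdot)=\varphi_1(\Phi^\sharp(B)\,\cdot)$, and the map $B\mapsto\Phi^\sharp(B)$ inherits normality and unitality from $\Phi$ by duality.

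The main obstacle I anticipate is the (2) $\Rightarrow$ (1) direction: while Radon--Nikodym yields a candidate operator $\Phi^\sharp(B)$, verifying that $B\mapsto\Phi^\sharp(B)$ is genuinely completely positive (not merely positive) requires the modular covariance of (2) in an essential way, propagated through matrix amplifications or a Stinespring dilation of the candidate dual. The unbounded functional-calculus passages in (2) $\Leftrightarrow$ (3) need careful domain tracking but are otherwise routine.
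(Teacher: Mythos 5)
The paper itself gives no proof of this proposition --- it is quoted from \cite{Accardi-Cecchini} and \cite{NSZ} --- so your plan can only be measured against the standard arguments there. Writing (1), (2), (3) for the three conditions in the listed order, your treatment of (2)$\Leftrightarrow$(3) is essentially correct: $U_\Phi\Omega_{\varphi_1}=\Omega_{\varphi_2}$, the $*$-preservation of $\Phi$ and the closedness of $S_{\varphi_2}$ give $U_\Phi S_{\varphi_1}\subseteq S_{\varphi_2}U_\Phi$; and evaluating (2) on $\pi_{\varphi_1}(\mathfrak M_1)\Omega_{\varphi_1}$, using $\Delta_{\varphi_i}^{it}\Omega_{\varphi_i}=\Omega_{\varphi_i}$ and the fact that $\Omega_{\varphi_2}$ is separating for $\pi_{\varphi_2}(\mathfrak M_2)$, shows that (2) is equivalent to $U_\Phi\Delta_{\varphi_1}^{it}=\Delta_{\varphi_2}^{it}U_\Phi$ for all $t$. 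The one step you pass over is the return trip inside (3)$\Rightarrow$(2): from $U_\Phi\Delta_{\varphi_1}^{1/2}\subseteq\Delta_{\varphi_2}^{1/2}U_\Phi$ you must recover the unitary groups; the standard device is to observe that $U_\Phi(\Delta_{\varphi_1}^{1/2}+1)^{-1}=(\Delta_{\varphi_2}^{1/2}+1)^{-1}U_\Phi$ (surjectivity of $\Delta^{1/2}+1$) and then pass to bounded Borel functions, the point $0$ being harmless because the modular operators are injective. This is routine, but it is precisely where the ``domain tracking'' you mention actually lives.

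The genuine gap is (2)$\Rightarrow$(1), and your own closing remark concedes it. The Pedersen--Takesaki Radon--Nikodym theorem yields $k\in\mathfrak M_1$ with $\omega=\varphi_1(k\,\cdot\,)$ only for normal functionals that are $\sigma^{\varphi_1}$-invariant; your $\omega_B(A)=\varphi_2(B\Phi(A))$ is not invariant but only covariant, $\omega_B\circ\sigma_t^{\varphi_1}=\omega_{\sigma_{-t}^{\varphi_2}(B)}$, so the theorem does not apply as stated, and even granting a candidate $\Phi^\sharp(B)$ for each positive $B$, linearity in $B$, normality, unitality and above all complete positivity are exactly what is left unproved --- that is the heart of the equivalence, not a technicality to be ``propagated through matrix amplifications'' later. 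The clean route, essentially that of \cite{Accardi-Cecchini}, goes through (3) instead of (2): the commutant dual $\Phi':\mathfrak M_2'\rightarrow\mathfrak M_1'$, determined by $\left\langle A\Omega_1,\Phi'(Y)\Omega_1\right\rangle=\left\langle \Phi(A)\Omega_2,Y\Omega_2\right\rangle$, exists for \emph{every} state-preserving normal unital completely positive map, with no modular hypothesis; given (3) one sets $\Phi^\sharp(B):=J_1\Phi'(J_2BJ_2)J_1$, which takes values in $\mathfrak M_1$, is normal, unital and completely positive (conjugating input and output by the anti-$*$-isomorphisms $X\mapsto JX^*J$ preserves complete positivity, by the global-transpose argument on matrix amplifications), and satisfies $\varphi_2(B\Phi(A))=\varphi_1(\Phi^\sharp(B)A)$ by a one-line computation from $J_{\varphi_2}U_\Phi=U_\Phi J_{\varphi_1}$; note this is exactly the formula $\Phi'(Y)=J_1\Phi^\sharp(J_2YJ_2)J_1$ that the paper uses afterwards. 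In the converse direction you also do not need the KMS-analyticity machinery: (1) gives $U_{\Phi^\sharp}=U_\Phi^*$, and applying the $S$-operator intertwining to both $\Phi$ and $\Phi^\sharp$ yields $U_\Phi\Delta_{\varphi_1}\subseteq\Delta_{\varphi_2}U_\Phi$, hence (2) by the same resolvent argument as above.
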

If the equivalent conditions of the previous proposition are satisfied, then we say that $\Phi$ is a $(\varphi_1 ,\varphi_2)$-\textbf{preserving Markov map} \cite{Antha}.
\\
We have the following definition:
\begin{definition}
Let $\Phi:(\mathfrak M_1 ,\varphi_1)\rightarrow(\mathfrak M_2 , \varphi_2)$ be a $(\varphi_1,\varphi_2)$-preserving Markov map. We say that $\Phi$ is a \textbf{factorizable map} if there exists a probability space $\left\{ \mathfrak R , \omega \right\}$ and two deterministic preserving Markov operators $\alpha:(\mathfrak M_2 , \varphi_2)\rightarrow(\mathfrak R ,\omega)$ and  $\beta:(\mathfrak M_1 ,\varphi_1)\rightarrow (\mathfrak R , \omega )$ such that $\Phi=\alpha^{\sharp}\circ\beta$.
\\
The factorization $(\alpha , \beta)$ is minimal if 
$$ \mathfrak R = \alpha(\mathfrak M_2) \vee \beta(\mathfrak M_1)$$
where $\alpha(\mathfrak M_2) \vee \beta(\mathfrak M_1)$ is the von Neumann algebra generated by $\alpha(\mathfrak M_2)$ and $\beta(\mathfrak M_1)$.
\end{definition}
We underline that a factorization $(\alpha , \beta)$ of the preserving Markov operator $\Phi$ determine a factorization of linear contraction $U_\Phi$ since
$$U_\Phi= U_\alpha ^*  \ U_\beta $$
where $U_\alpha$ and $U_\beta$ are the linear isometries defined in (\ref{lincontr}) related to $\alpha$ and $\beta$ homomorphism.
\\

In the following of the discussion, unless noted otherwise,  we will consider the probability spaces $( \mathfrak M , \Omega)$ in standard form \textit{i.e.} concrete von Neumann algebra $\mathfrak M \subset \mathfrak B(\mathcal H)$ with $\Omega$ cyclic and separating vector of Hilbert space $\mathcal H$ and stochastic map $\Phi:(\mathfrak M_1  ,\Omega_1)\rightarrow (\mathfrak M_2 ,\Omega_2)$ such that
$$   \left\langle   \Omega_2, \Phi(A ) \Omega_2 \right\rangle =  \left\langle  \Omega_1, A  \Omega_1 \right\rangle $$
for all $ A\in\mathfrak M_1 $.
\\
For a preserving Markov operator $\Phi:(\mathfrak M_1  ,\Omega_1)\rightarrow (\mathfrak M_2 ,\Omega_2)$,  its dual  
map \cite{mohari} $\Phi':(\mathfrak M'_2  ,\Omega_2)\rightarrow (\mathfrak M'_1 ,\Omega_1)$ such that
$$\left\langle A \Omega_1  |  \Phi'(Y) \Omega_1  \right\rangle= \left\langle \Phi(A) \Omega_2  |  Y \Omega_2  \right\rangle$$
for all $A\in\mathfrak M_1$ and $Y\in\mathfrak M_2'$ is defined as 
$$ \Phi'(Y) = J _1 \Phi^{\sharp}(J_2 Y J_2 ) J_1 $$
We observe that if $(\mathfrak M, \varphi)$ is a probability space then $\varphi$ is  factorizable.
\\
Indeed, we can define two preserving Markov maps $\alpha,\beta: (\mathfrak M,\varphi) \rightarrow (\pi_\varphi(\mathfrak M) \overline{\otimes} \pi_\varphi (\mathfrak M), \Omega_\varphi \otimes \Omega_\varphi)$ where for any $a\in\mathfrak M$:
$$ \alpha(a)=\pi_\varphi(a) \otimes 1 \qquad \text{and} \qquad \beta(a)=1 \otimes \pi_\varphi(a)$$
and $\pi_\varphi(\mathfrak M) \overline{\otimes} \pi_\varphi (\mathfrak M)$ is the von Neumann algebra of $\mathfrak{B}(\mathcal{H}_\varphi\otimes\mathcal{H}_\varphi)$ weakly closure of the *-algebra generated by elements $\overset{n}{\underset{i}{\sum}}A_i\otimes B_i$, with $A_i, B_i\in\pi_\varphi(\mathfrak M)$.
\\
Furthermore for the adjoint maps we have:
$$\alpha^\sharp(A\otimes B)= \left\langle \Omega_\varphi , B\Omega_\varphi \right\rangle A \qquad \text{and} \qquad \beta^\sharp(A\otimes B)= \left\langle \Omega_\varphi , A\Omega_\varphi \right\rangle B$$
for all $A,B\in\pi_\varphi (\mathfrak M)$ 
and
$\varphi(a) I = \beta^\sharp (\alpha(a))=\alpha^\sharp (\beta(a))$ for all $a\in\mathfrak M$. 
\\

We recall briefly the Stinespring representations associated to unital completely positive maps \cite{Paulsen}.
\\
Let $\mathfrak M_1$ and $\mathfrak M_2 $ be a concrete von Neumann algebra of $\mathcal{B}(\mathcal H_1)$ and $\mathcal{B}(\mathcal H_2)$ respectively.
 \\
Let $\Phi:\mathfrak M_1 \rightarrow\mathfrak M_2 $ be a normal unital completely positive map  
\\
On the algebraic tensor  $\mathfrak M_1 \otimes\mathcal H_2$ we can define a semi-inner product by
\[
\left\langle A \otimes h| X \otimes k \right\rangle=
\left\langle h ,\Phi\left( A ^{\ast} X\right) k \right\rangle
\]
for all $A,X\in\mathfrak{M}_1$ and $h, k\in\mathcal H_2 $.
\\
Furthermore the Hilbert space $\mathcal{L}_{\Phi}$ is the completion of the quotient space 
$\mathfrak M_1 \overline{\otimes}_{\Phi}\mathcal H_2 $ of $\mathfrak M_1 \otimes\mathcal H_2$ by the linear subspace
\begin{equation}
\label{V-space}
\mathcal{V}=\left\{ l \in\mathfrak{M}_1\otimes\mathcal{H}_2 :\left\langle l \ |  \ l \right\rangle = 0 \right\}
\end{equation}
with inner product induced by $\left\langle \cdot \ ,\cdot\right\rangle$. 
\newline
We shall denote the image at  $A\otimes h\in\mathfrak{M}_1\otimes\mathcal{H}_2$ in $\mathfrak{M}_1\overline{\otimes}_{\Phi}\mathcal{H}_2$ 
by $A\overline{\otimes}_{\Phi}h $, so that we have
\[
\left\langle A \overline{\otimes}_{\Phi}\ h , X \overline{\otimes}_{\Phi} k \right\rangle _{\mathcal{L}_{\Phi}}=
\left\langle h ,\Phi\left(A^{\ast} X \right) k \right\rangle
\]
for all $A, X \in\mathfrak{M}_1$ and $ h , k \in\mathcal{H}_2$.
\\
Moreover we can define a representation $\sigma_{\Phi}:\mathfrak{M}_1\rightarrow\mathcal{B}(\mathcal{L}_{\Phi})$ defined by 
\[
\sigma_{\Phi}(A)(X \overline{\otimes}_{\Phi} h) = AX \overline{\otimes}_{\Phi} h,
\]
for each $A\overline{\otimes}_{\Phi} h\in\mathcal{L}_{\Phi}$ and $V_{\Phi} h =\mathbf{1}\overline{\otimes}_{\Phi}h$ 
for each $h\in\mathcal{H}_2$
\\
Since $\Phi$ is a unital map, the linear operator $V_{\Phi}$ is an isometry with adjoint $V_{\Phi}^{\ast}$ 
defined as
\[
V_{\Phi}^* A\overline{\otimes}_{\Phi} h = \Phi(A) h 
\] 
for all $A\in\mathfrak M_1 $ and $h\in\mathcal{H}_2$.
\\
We can define the following linear operator $\Lambda_{\Phi}:\mathcal H_1 \rightarrow  \mathcal{L}_{\Phi}$:
\[
\Lambda_{\Phi} A\Omega_1 = A \overline{\otimes}_{\Phi}\Omega_2
\]
for all $A\in\mathfrak{M}_1$
\\
We remark that 
$$U_\Phi= V^*_{\Phi} \Lambda_{\Phi}$$
It is easy to prove that for each $A\in\mathfrak{M}_1$ and $h\in\mathcal{H}_2$ we have:
\[
\Lambda^*_{\Phi}A\overline{\otimes}_{\Phi}h = A U_{\Phi}^*h
\]
Furthermore
\begin{equation}
\label{st1}
\Lambda^*_{\Phi}\sigma_{\Phi}(A)\Lambda_{\Phi}=A,
\end{equation}
and
\begin{equation}
\label{st2}
\Lambda_{\Phi}\Lambda_{\Phi}^*\in\sigma_{\Phi}(\mathfrak{M}_1)'
\end{equation}
We have a new statement:
\begin{proposition}
There is a normal representation $\tau_{\Phi}:\mathfrak{M}_2'\rightarrow\mathcal{B}(\mathcal{L}_{\Phi})$ such that for each $Y\in\mathfrak{M}'_2$ and $A\overline{\otimes}_{\Phi}h\in\mathcal{L}_{\Phi}$
\[
\tau_{\Phi}(Y) A\overline{\otimes}_{\Phi}h = A\overline{\otimes}_{\Phi}Yh,
\]
with
\begin{equation}
V_{\Phi}^*\tau_{\Phi}(Y) V_{\Phi}=Y
\end{equation}
Furthermore 
\begin{equation}
\label{stdual}
\tau_{\Phi}(\mathfrak{M}'_2)\subset\sigma_{\Phi}(\mathfrak{M}_1)',
\end{equation}
and
\begin{equation}
V_{\Phi} V_{\Phi}^* \in\tau_{\Phi}(\mathfrak{M}'_2)'
\end{equation}
\end{proposition}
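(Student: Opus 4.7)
The plan is to define $\tau_{\Phi}(Y)$ first on the algebraic tensor product $\mathfrak M_1\otimes\mathcal H_2$ by $\tau_{\Phi}(Y)(A\otimes h)=A\otimes Yh$, extend it linearly, and then show that this descends to the quotient by $\mathcal V$ and extends by continuity to $\mathcal L_{\Phi}$. The only nontrivial point here is showing that $\tau_{\Phi}(Y)$ preserves the semi-norm up to the constant $\|Y\|$. For $l=\sum_{i=1}^{n} A_i\otimes h_i$, a direct computation gives
\[
\|\tau_{\Phi}(Y) l\|^2=\sum_{i,j}\bigl\langle h_i,Y^{*}\Phi(A_i^{*}A_j)Y h_j\bigr\rangle=\sum_{i,j}\bigl\langle h_i,\Phi(A_i^{*}A_j)Y^{*}Y h_j\bigr\rangle,
\]
where we used that $Y,Y^{*}\in\mathfrak M_2'$ commute with $\Phi(A_i^{*}A_j)\in\mathfrak M_2$. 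The key observation is that the matrix $[\Phi(A_i^{*}A_j)]$ is positive in $M_n(\mathfrak M_2)$ by complete positivity of $\Phi$, and the diagonal matrix with entries $Y^{*}Y$ commutes with it and is bounded by $\|Y\|^2 I_n$. Hence $\|\tau_{\Phi}(Y)l\|^2\le\|Y\|^2\|l\|^2$, which simultaneously shows well-definedness on the quotient, boundedness, and the norm estimate $\|\tau_{\Phi}(Y)\|\le\|Y\|$. This boundedness step is where I expect the only real work; the remaining claims are then essentially formal.

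With $\tau_{\Phi}$ defined on $\mathcal L_{\Phi}$, I would then verify the *-homomorphism properties directly on the dense set of elementary tensors: multiplicativity is immediate from $A\overline{\otimes}_{\Phi} Y_1 Y_2 h=\tau_{\Phi}(Y_1)(A\overline{\otimes}_{\Phi}Y_2 h)$, the unitality is trivial, and for the *-property one checks
\[
\bigl\langle \tau_{\Phi}(Y^{*})(A\overline{\otimes}_{\Phi} h),X\overline{\otimes}_{\Phi} k\bigr\rangle=\langle h,Y\Phi(A^{*}X) k\rangle=\langle h,\Phi(A^{*}X) Y k\rangle=\bigl\langle A\overline{\otimes}_{\Phi} h,\tau_{\Phi}(Y)(X\overline{\otimes}_{\Phi} k)\bigr\rangle,
\]
again using the commutation of $Y$ with $\mathfrak M_2$. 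Normality follows from $\|\tau_{\Phi}(Y)\|\le\|Y\|$ together with the fact that for fixed $A,X\in\mathfrak M_1$ and $h,k\in\mathcal H_2$, the map $Y\mapsto\langle Yh,\Phi(A^{*}X) k\rangle$ is $\sigma$-weakly continuous on $\mathfrak M_2'$, so $Y_{\alpha}\to Y$ in the $\sigma$-weak topology on bounded sets implies $\tau_{\Phi}(Y_{\alpha})\to\tau_{\Phi}(Y)$ weakly on $\mathcal L_{\Phi}$.

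Finally, the three commutation identities reduce to one-line checks on elementary tensors. For $V_{\Phi}^{*}\tau_{\Phi}(Y)V_{\Phi}=Y$, use the definition of $V_{\Phi}$, $V_{\Phi}^{*}$ and $\Phi(\mathbf 1)=\mathbf 1$:
\[
V_{\Phi}^{*}\tau_{\Phi}(Y)V_{\Phi} h=V_{\Phi}^{*}(\mathbf 1\overline{\otimes}_{\Phi} Yh)=\Phi(\mathbf 1) Yh=Yh.
\]
For the inclusion $\tau_{\Phi}(\mathfrak M_2')\subset\sigma_{\Phi}(\mathfrak M_1)'$, note that $\sigma_{\Phi}(A)$ acts on the left factor while $\tau_{\Phi}(Y)$ acts on the right factor, so they trivially commute:
\[
\tau_{\Phi}(Y)\sigma_{\Phi}(A)(X\overline{\otimes}_{\Phi} h)=AX\overline{\otimes}_{\Phi} Yh=\sigma_{\Phi}(A)\tau_{\Phi}(Y)(X\overline{\otimes}_{\Phi} h).
\]
For $V_{\Phi}V_{\Phi}^{*}\in\tau_{\Phi}(\mathfrak M_2')'$, compute
\[
V_{\Phi}V_{\Phi}^{*}\tau_{\Phi}(Y)(A\overline{\otimes}_{\Phi} h)=\mathbf 1\overline{\otimes}_{\Phi}\Phi(A)Yh,\qquad \tau_{\Phi}(Y)V_{\Phi}V_{\Phi}^{*}(A\overline{\otimes}_{\Phi} h)=\mathbf 1\overline{\otimes}_{\Phi} Y\Phi(A) h,
\]
and these agree by the commutation of $Y\in\mathfrak M_2'$ with $\Phi(A)\in\mathfrak M_2$. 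This completes the plan.
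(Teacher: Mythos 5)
Your proposal is correct and follows essentially the same route as the paper: define $\tau_{\Phi}(Y)$ on elementary tensors, establish the bound $\|\tau_{\Phi}(Y)l\|^2\le\|Y\|^2\|l\|^2$ (which simultaneously handles the quotient by $\mathcal V$ and boundedness), obtain normality by testing a $\sigma$-weakly convergent bounded net against elementary tensors, and verify the remaining identities by direct computation. The only cosmetic difference is that the paper gets the key estimate from automatic continuity of the positive functional $\omega_{l}(Y)=\left\langle l , Y l\right\rangle$ on $\mathfrak M_2'$, while you derive it directly from positivity of the matrix $[\Phi(A_i^{*}A_j)]$ and its commutation with the diagonal matrix built from $Y^{*}Y$ --- the same complete-positivity input in slightly different packaging.
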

\begin{proof}
We fix a vector $ l =\overset{n}{\underset{i}{\sum}} A_{i}\otimes h_{i}\in\mathfrak{M}_1\otimes\mathcal{H}_2 $ and we get the following linear functional on 
$\mathfrak{M}'_2$
\[
\omega_{l}(Y)   = \left\langle l | Y l \right\rangle = \overset{n}{\underset{i,j}{\sum}}\left\langle h_{i},\Phi(A_{i}^{\ast}A_{j})Y h_{j}\right\rangle
\]
for all $Y\in\mathfrak{M}'_2$.
\\
The linear functional $\omega_{l}$ is positive since 
\[
\omega_{l}(Y^*Y)=\overset{n}{\underset{i,j}{\sum}}\left\langle Y h_{i},\Phi(A_{i}^{\ast}A_{j})Y h_{j}\right\rangle\geq0
\]
then $\omega_{l}$ is a continuous functional \cite{Bra-Rob}  with $\omega_{l}(1) = \left\langle l | l \right\rangle$  and  
$$|\omega_{l}(Y^*Y)|\leq||Y||^2\omega_{l}(1)=||Y||^2\left\langle l | l \right\rangle$$ 
For each $Y\in\mathfrak{M}'_2$ we can define
\[
\tau_{o}(Y) \overset{n}{\underset{i}{\sum}} A_{i}\otimes h_{i}=\overset{n}{\underset{i}{\sum}} A_{i}\otimes Yh_{i}.
\]
We observe that if $l=0$ then $\tau_{o}(Y)l=0$ since $||\tau_{o}(Y)l||^{2}=|\omega_{l}(Y^*Y)|\leq||Y||^2\left\langle l | l \right\rangle=0$.
\newline
Therefore $\tau_{o}(\mathfrak{M}')\mathcal{V}\subset\mathcal{V}$ where $\mathcal{V}$ is the linear space (\ref{V-space}). It follows that $\tau_{\Phi}:\mathfrak M'_2 \rightarrow\mathcal{B}(\mathcal{H})$ defined as 
\[
\tau_{\Phi}(Y) \overset{n}{\underset{i}{\sum}} A_{i}\overline{\otimes}_{\Phi} h_{i}= \overset{n}{\underset{i}{\sum}} A_{i}\overline{\otimes}_{\Phi} Yh_{i},
\]
for all $Y \in\mathfrak{M}'_2$ is a well-defined representation of the von Neumann algebra $\mathfrak{M}'_2$.
\\
Let $\left\{Y_{\alpha}\right\}_{\alpha}$ be a net in $\mathfrak{M}'$ such that $Y_{\alpha}\rightarrow Y$ in $\sigma$-top,
for each $A\overline{\otimes}_{\Phi}h\in\mathcal{L}_{\Phi}$ we obtain:
\[
\left\langle A\overline{\otimes}_{\Phi}h, \tau_{\Phi}(Y_{\alpha}) A\overline{\otimes}_{\Phi}h\right\rangle=
\left\langle h,\Phi(A^*A)Y_{\alpha}h \right\rangle\rightarrow
\left\langle h, \Phi(A^*A)Y h\right\rangle=
\left\langle A\overline{\otimes}_{\Phi}h, \tau_{\Phi}(Y) A\overline{\otimes}_{\Phi}h\right\rangle
\]
therefore the representation $\tau_{\Phi}$ is $\sigma$-top continuous.
\\
The others relationships are straightforward.
\end{proof}
If $\Phi:(\mathfrak M_1  ,\Omega_1)\rightarrow (\mathfrak M_2 ,\Omega_2)$ is a preserving Markov operator then 
\begin{equation}
\label{st5}
\Lambda_{\Phi}^*\tau_{\Phi}(Y)\Lambda_{\Phi}= \Phi'(Y)
\end{equation}
for all $Y\in\mathfrak M_2'$.
\\
Indeed for each $A\in\mathfrak{M}_1$ we have
\begin{eqnarray*}
\Lambda_{\Phi}^*\tau_{\Phi}(Y)\Lambda_{\Phi}A\Omega_1 &=&
\Lambda_{\Phi}^*\tau_{\Phi}(Y) A\overline{\otimes}_{\Phi}\Omega_2 =
\Lambda_{\Phi}^*A\overline{\otimes}_{\Phi}Y\Omega_2= A U_{\Phi}^{*}Y\Omega_2
\\
&=&
A J_1 U_{\Phi}^{*}J_2 Y J_2 \Omega_2 = A J _1 \Phi^{\sharp}(J_2 Y J_2 )\Omega_1=
\\
&=&
J_1 \Phi^{\sharp}(J_2 Y J_2) J_1 A J_1 \Omega_1=
J_1 \Phi^{\sharp}(J_2 Y J_2)J_1 A\Omega_1 =
\\
&=& \Phi'(Y) A\Omega_1
\end{eqnarray*}
We observe that $\sigma_{\Phi}(\mathfrak{M}_1)$ and $\tau_{\Phi}(\mathfrak{M}_2')$  are von Neuman algebras, since the representations $\sigma_{\Phi}:\mathfrak{M}_1\rightarrow\mathcal{B}(\mathcal{L}_{\Phi})$ and $\tau_{\Phi}:\mathfrak{M}_2'\rightarrow\mathcal{B}(\mathcal{L}_{\Phi'})$ are normal maps.
\\
Furthermore, for each $A\in\mathfrak M_2$ and $Y\in\mathfrak M'_1$ we have:
$$ V_{\Phi}^* \sigma_{\Phi}(A)\tau_{\Phi}(Y)V_{\Phi}=\Phi(A) Y \in\mathfrak M_2 \cdot \mathfrak M'_2 \subset \mathfrak B(\mathcal H_2)$$
while 
$$\Lambda_{\Phi}^*\sigma_{\Phi}(A)\tau_{\Phi}(Y)\Lambda_{\Phi}= A \Phi'(Y) \in\mathfrak M_1 \cdot \mathfrak M'_1 \subset \mathfrak B(\mathcal H_1)$$
It follows that if $\mathfrak B = \sigma(\mathfrak M_1) \vee \tau_\Phi(\mathfrak M_2')$ is von Neumann algebra of $\mathfrak B(\mathcal L_\Phi)$ generated by $\sigma(\mathfrak M_1) $ and $\tau_\Phi(\mathfrak M_2')$, then we can define two unital completely positive maps $E_i:\mathfrak B \rightarrow \mathfrak M_i \cdot \mathfrak M_i ' $ with $i=1,2$ such that for any $T\in\mathfrak B$
$$E_1 (T) = V_{\Phi}^* T V_{\Phi} \qquad \text{and} \qquad  E_2 (T) = \Lambda_{\Phi}^* T \Lambda_{\Phi} $$
with
$$ \left\langle \Omega_i,E_i(T) \Omega_i \right\rangle = \left\langle \Omega_\Phi, T \Omega_\Phi \right\rangle $$ 
\\
Furthermore, for any $X_i\in \mathfrak M_2$ and $h_i\in \mathcal H_1$ with $i=1,2\ldots n$, we can define 
\begin{equation}
\label{antiuniw}
W \sum_{i=1}^{n } X_i\overline{\otimes}_{\Phi^\sharp} h_i   = \sum_{i=1}^{n}J_2 X_i J_2 \overline{\otimes}_{\Phi'} J_1 h_i
\end{equation}
where
\begin{align*}  
|| W \sum_{i=1}^{n } X_i\overline{\otimes}_{\Phi^\sharp} h_i  ||^2 & = 
\sum_{i,j}  \left\langle  J_2 X_i J_2 \overline{\otimes}_{\Phi ' } J_1 h_i , J_2 X_j J_2 \overline{\otimes}_{\Phi '} J_1 h_j \right\rangle =
\\
&=
\sum_{i,j}  \left\langle J_1 h_i ,\Phi ' (J_2 X^*_i X_j J_2 ) J_1 h_j \right\rangle = 
\\
&= 
\sum_{i,j}  \left\langle  h_i ,\Phi ^\sharp (X^*_i X_j  ) h_j \right\rangle =
|| \sum_{i=1}^{n } X_i\overline{\otimes}_{\Phi^\sharp} h_i  ||^2
\end{align*} 
In other words, we have an anti-unitary operator $W:\mathcal L_{\Phi^\sharp } \rightarrow \mathcal L_{\Phi'}$ such that
\begin{equation*}
W^* \tau_{\Phi'}(A_1) W = \tau_{\Phi^\sharp} (J_1 A_1 J_1) \quad \text{and} \quad W^* \sigma_{\Phi'}(Y_2) W = \sigma_{\Phi^\sharp} (J_2 Y_2 J_2)  
\end{equation*}
for all $A_1\in\mathfrak M_1$ and $Y_2\in\mathfrak M_2'$.
\section{Factorization of deterministic map}
In this section we prove that each deterministic preserving Markov operator $\Phi:(\mathfrak M_1, \Omega_1 )\rightarrow(\mathfrak M_2,\Omega_2)$ is factorizable.
\\
We set with $ (\mathcal L_{\Phi^\sharp}, \sigma_{\Phi^\sharp}, V_{\Phi^\sharp})$  and  $(\mathcal L_{\Phi^\sharp},\tau_{\Phi^\sharp}, \Lambda_{\Phi^\sharp})$ the Stinespring representatons of $\Phi^{\sharp}$, the adjoint of $\Phi$.
\\
We observe that the vector $\Omega_{\Phi^\sharp} = 1\overline{\otimes}_{\Phi^{\sharp}}\Omega_1$ is separable for the von Neumann algebra 
$\sigma_{\Phi^\sharp}(\mathfrak M_2)$ and  we have a probability space $(\sigma_{\Phi^\sharp}(\mathfrak M_2),\omega)$ where for any $T\in\sigma_{\Phi^\sharp}(\mathfrak M_2)$ we have defined  $\omega(T)=\left\langle \Omega_{\Phi^\sharp} ,T\Omega_{\Phi^\sharp} \right\rangle$.
\newline
Moreover the map $\sigma_{\Phi^\sharp}:(\mathfrak M_2,\Omega_2)\rightarrow(\sigma_{\Phi^\sharp}(\mathfrak M_2),\omega)$ is a preserving Markov operator with adjoint $\sigma_{{\Phi}^{\sharp}}^{\sharp}(T)=\Lambda^{\sharp *}T\Lambda^{\sharp}$ for all $T\in\sigma_{{\Phi}^{\sharp}}(\mathfrak M_2)$ since for each $A,B\in\mathfrak M_2$ we have
\[
\left\langle \Omega_{\Phi^\sharp} , \sigma_{{\Phi}^{\sharp}}(B) \sigma_{{\Phi}^{\sharp}}(A)\Omega_{\Phi^\sharp} \right\rangle=
\left\langle \Omega_1,\Phi^{\sharp}(BA)\Omega_1\right\rangle=
\left\langle \Omega_2, BA \Omega_2\right\rangle=
\left\langle \Omega_2,\Lambda_{\Phi^\sharp}^*\sigma_{\Phi^\sharp}(B)\Lambda_{\Phi^\sharp} A\Omega_2\right\rangle
\]
The map $\Theta:(\mathfrak M_1,\Omega_1)\rightarrow (\sigma_{\Phi^\sharp}(\mathfrak M_2),\omega)$ defined by
$$
\Theta(A)=\sigma_{{\Phi}^{\sharp}}(\Phi(A))
$$
for all $A\in\mathfrak M_1$, is a preserving Markov operator with adjoint $\Theta^{\sharp}(T)=V_{\Phi^\sharp}^*T V_{\Phi^\sharp}$ for all $T\in\sigma_{\Phi^\sharp}(\mathfrak M_2)$.
\\
Indeed for each $A\in\mathfrak M_1$ and $B\in\mathfrak M_2$ we obtain:
\begin{eqnarray*}
\left\langle \Omega_{\Phi^\sharp} , \sigma_{\Phi^\sharp}(B) \Theta(A)\Omega_{\Phi^\sharp} \right\rangle &=&
\left\langle \Omega_{\Phi^\sharp}  , \sigma_{\Phi^\sharp}(B\Phi(A))\Omega_{\Phi^\sharp} \right\rangle=
\left\langle \Omega_1, \Phi^{\sharp}(B\Phi(A)) \Omega_1\right\rangle=
\\
&=&
\left\langle \Omega_2,B\Phi(A)\Omega_2\right\rangle=
\left\langle \Omega_1,\Phi^{\sharp}(B) A \Omega_1 \right\rangle=
\\
&=&
\left\langle \Omega_1,\Theta^{\sharp}(\sigma_{{\Phi}^{\sharp}}(B) )A\Omega_1\right\rangle
\end{eqnarray*}
We have the following proposition:
\begin{proposition}
Any deterministic preserving Markov operator $\Phi:(\mathfrak M_1, \Omega_1 )\rightarrow(\mathfrak M_2,\Omega_2)$ admits a factorization. 
\end{proposition}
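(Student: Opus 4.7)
The construction needed for the factorization is essentially already laid out in the preceding discussion; the main point is to verify that when $\Phi$ is deterministic, the two preserving Markov operators $\sigma_{\Phi^\sharp}$ and $\Theta$ introduced above give an honest factorization in the sense of the definition.

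The plan is to take $\mathfrak{R}=\sigma_{\Phi^\sharp}(\mathfrak{M}_2)$ equipped with the state $\omega$, set $\alpha=\sigma_{\Phi^\sharp}:(\mathfrak{M}_2,\Omega_2)\to(\mathfrak{R},\omega)$ and $\beta=\Theta=\sigma_{\Phi^\sharp}\circ\Phi:(\mathfrak{M}_1,\Omega_1)\to(\mathfrak{R},\omega)$, and show that $\Phi=\alpha^\sharp\circ\beta$. First I would note that both candidates are homomorphisms: $\sigma_{\Phi^\sharp}$ is a $*$-representation of $\mathfrak{M}_2$, hence deterministic, and $\Theta$ is the composition of the two homomorphisms $\Phi$ and $\sigma_{\Phi^\sharp}$, hence also deterministic. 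That they are preserving Markov operators (and so in particular admit $\sharp$-adjoints, $\alpha^\sharp=\sigma_{\Phi^\sharp}^\sharp$ and $\beta^\sharp=\Theta^\sharp$) has already been established in the discussion preceding the statement.

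Next I would verify the identity $\Phi=\alpha^\sharp\circ\beta$. For each $A\in\mathfrak{M}_1$,
\[
(\alpha^\sharp\circ\beta)(A)=\sigma_{\Phi^\sharp}^\sharp\bigl(\sigma_{\Phi^\sharp}(\Phi(A))\bigr)=\Lambda_{\Phi^\sharp}^{*}\,\sigma_{\Phi^\sharp}(\Phi(A))\,\Lambda_{\Phi^\sharp}=\Phi(A),
\]
where the last equality is relation (\ref{st1}) applied to the Stinespring representation of $\Phi^\sharp$ evaluated at the element $\Phi(A)\in\mathfrak{M}_2$. This gives the required factorization. As a bonus, since $\beta(\mathfrak{M}_1)=\sigma_{\Phi^\sharp}(\Phi(\mathfrak{M}_1))\subseteq\sigma_{\Phi^\sharp}(\mathfrak{M}_2)=\alpha(\mathfrak{M}_2)$, one has trivially $\alpha(\mathfrak{M}_2)\vee\beta(\mathfrak{M}_1)=\mathfrak{R}$, so the factorization is automatically minimal.

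There is no serious obstacle here: the Stinespring machinery of $\Phi^\sharp$ was already constructed precisely to make this work, and the multiplicativity of $\Phi$ (together with that of the representation $\sigma_{\Phi^\sharp}$) is exactly what is needed to ensure $\beta$ is deterministic — this multiplicativity is the only place the hypothesis ``$\Phi$ deterministic'' enters. The mildest point to be careful about is that the adjoints $\alpha^\sharp$ and $\beta^\sharp$ computed in the discussion preceding the statement are the $(\Omega_2,\omega)$- and $(\Omega_1,\omega)$-adjoints in the sense required by the definition of factorizable map, which follows from the identities $\sigma_{\Phi^\sharp}^\sharp(T)=\Lambda_{\Phi^\sharp}^*T\Lambda_{\Phi^\sharp}$ and $\Theta^\sharp(T)=V_{\Phi^\sharp}^*T V_{\Phi^\sharp}$ for $T\in\sigma_{\Phi^\sharp}(\mathfrak{M}_2)$ derived there.
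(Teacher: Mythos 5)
Your proposal is correct and follows essentially the same route as the paper: both take $\mathfrak R=\sigma_{\Phi^\sharp}(\mathfrak M_2)$ with $\alpha=\sigma_{\Phi^\sharp}$ and $\beta=\Theta=\sigma_{\Phi^\sharp}\circ\Phi$, relying on the preceding discussion for the Markov and adjoint properties. The only (immaterial) difference is that you verify $\Phi=\sigma_{\Phi^\sharp}^{\sharp}\circ\Theta$ directly via relation (\ref{st1}), while the paper checks the adjoint identity $\Phi^{\sharp}=\Theta^{\sharp}\circ\sigma_{\Phi^\sharp}$ and dualizes; your explicit remarks on where determinism of $\Phi$ enters (multiplicativity of $\beta$) and on minimality are welcome additions.
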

\begin{proof}
We have that $\Phi^{\sharp}(A)=\Theta^{\sharp}({\sigma}_{{\Phi}^{\sharp}}(A))$ for all $A\in\mathfrak M_1$ it follows that $\Phi=\sigma_{\Phi^\sharp}^{\sharp}\circ\Theta$.
\end{proof}

\section{Factorization and Stinespring representations}
We want to study the possibility of extending to any preserving Markov operators, the Stinespring representations methods used in the previous section to the deterministic case.
\\
Let $(\mathfrak M_1 ,\Omega_1) $  and $(\mathfrak M_2,\Omega_2) $ be standard von Neumann algebras in Hilbert spaces $\mathcal H_1$ and $\mathcal H_2$  respectively,  and $\Phi:(\mathfrak M_1,\Omega_1)\rightarrow(\mathfrak M_2,\Omega_2)$  a preserving Markov map with adjoint
 $\Phi^\sharp:(\mathfrak M_2,\Omega_2)\rightarrow (\mathfrak M_1,\Omega_1)$.
\\
We consider the Stinespring representations
$(\mathcal L_{\Phi^\sharp}, \sigma_{\Phi^\sharp}, V_{\Phi^\sharp})$  and $(\mathcal L_{\Phi^\sharp}, \tau_{\Phi^\sharp}, \Lambda_{\Phi^\sharp})$ of $\Phi^\sharp$.
\\
We assume that  there is anti-unitary operator $\widehat{J}:\mathcal L_{\Phi^\sharp} \rightarrow \mathcal L_{\Phi^\sharp}$ with the following property: 
\begin{equation}
\label{antiunij}
\widehat{J} V_{\Phi^\sharp}=V_{\Phi^\sharp} J_1 
\end{equation}
and we consider the von Neumann algebra $\mathfrak R $ of $\mathfrak B(\mathcal L_{\Phi^\sharp})$ generated by $ \sigma_{\Phi^\sharp}(\mathfrak M_2)$ and $\widehat{J} ^* \tau_{\Phi^\sharp}(\mathfrak M_1') \widehat{J}$.
\\
We remark that $\Omega_{\Phi^\sharp}\in\mathcal L_{\Phi^\sharp}$ is cyclic vector for $\mathfrak R $ since for each $A_2\in\mathfrak M_2$ and $Y_1\in\mathfrak M_1$ we obtain
$$\alpha(A_2) \widehat{J} ^* \tau_{\Phi^\sharp}(Y_1) \widehat{J} \Omega_{\Phi^\sharp}= A_2 \overline{\otimes}_{\Phi^\sharp} J_1 Y_1 \Omega_1 $$
We observe that
$$
 \mathfrak M_1\subset V_{\Phi^\sharp}^* \mathfrak R  V_{\Phi^\sharp} \quad \text{while }\quad \mathfrak M_2 \subset  \Lambda_{\Phi^\sharp}^* \mathfrak R \Lambda_{\Phi^\sharp}$$
We have the following proposition:
\begin{proposition}
Let  $\Phi:(\mathfrak M_1,\Omega_1)\rightarrow (\mathfrak M_2,\Omega_2)$ be a preserving Markov Operator and $\widehat{J}:\mathcal L_{\Phi^\sharp} \rightarrow \mathcal L_{\Phi^\sharp}$ which satisfies the relationships (\ref{antiunij}). If 
\begin{equation}
\label{cond1}
V_{\Phi^\sharp}^* \mathfrak R  V_{\Phi^\sharp} \subset \mathfrak M_1 \quad and \quad \Lambda_{\Phi^\sharp}^* \mathfrak R \Lambda_{\Phi^\sharp}\subset \mathfrak M_2
\end{equation}
then $(\mathfrak R, \Omega_{\Phi^\sharp})$ is standard in $\mathcal L_{\Phi^\sharp}$ and $\Phi$ is factorizable.
\end{proposition}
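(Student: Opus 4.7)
The plan is to construct the factorization using the natural candidates available from the generators of $\mathfrak R$: set $\alpha:\mathfrak M_2\to\mathfrak R$ to be the Stinespring representation $\sigma_{\Phi^\sharp}$, and $\beta:\mathfrak M_1\to\mathfrak R$ to be the map $A_1\mapsto \widehat{J}^{\ast}\tau_{\Phi^\sharp}(J_1A_1J_1)\widehat{J}$. Both land in $\mathfrak R$ by the definition of that algebra, and both are normal unital $\ast$-homomorphisms: $\alpha$ tautologically, and $\beta$ because it factors as the anti-$\ast$-isomorphism $\mathfrak M_1\to\mathfrak M_1'$, $A_1\mapsto J_1A_1J_1$, composed with the representation $\tau_{\Phi^\sharp}$ and then with the anti-$\ast$-automorphism $T\mapsto\widehat{J}^{\ast}T\widehat{J}$. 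The invariance $\widehat{J}\Omega_{\Phi^\sharp}=\widehat{J}V_{\Phi^\sharp}\Omega_1=V_{\Phi^\sharp}J_1\Omega_1=\Omega_{\Phi^\sharp}$ combined with $V_{\Phi^\sharp}^{\ast}\sigma_{\Phi^\sharp}(A_2)V_{\Phi^\sharp}=\Phi^\sharp(A_2)$ and $V_{\Phi^\sharp}^{\ast}\tau_{\Phi^\sharp}(Y)V_{\Phi^\sharp}=Y$ will give the state preservation $\omega\circ\alpha=\varphi_2$ and $\omega\circ\beta=\varphi_1$.

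Next, I would verify that the candidate adjoints $\alpha^{\sharp}(T)=\Lambda_{\Phi^\sharp}^{\ast}T\Lambda_{\Phi^\sharp}$ and $\beta^{\sharp}(T)=V_{\Phi^\sharp}^{\ast}TV_{\Phi^\sharp}$ for $T\in\mathfrak R$ are unital completely positive maps with values in $\mathfrak M_2$ and $\mathfrak M_1$ precisely by the hypothesis~(\ref{cond1}), and the identities $\Lambda_{\Phi^\sharp}\Omega_2=\Omega_{\Phi^\sharp}=V_{\Phi^\sharp}\Omega_1$ show that they are the $(\omega,\varphi_i)$-adjoints of $\alpha$ and $\beta$. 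The factorization equation follows from the key computation
\[
\beta(A_1)\Omega_{\Phi^\sharp}=\widehat{J}^{\ast}\tau_{\Phi^\sharp}(J_1A_1J_1)\Omega_{\Phi^\sharp}=\widehat{J}^{\ast}V_{\Phi^\sharp}J_1A_1\Omega_1=V_{\Phi^\sharp}A_1\Omega_1,
\]
which yields $(\alpha^{\sharp}\circ\beta)(A_1)\Omega_2=\Lambda_{\Phi^\sharp}^{\ast}V_{\Phi^\sharp}A_1\Omega_1=U_\Phi A_1\Omega_1=\Phi(A_1)\Omega_2$; since $\Omega_2$ separates $\mathfrak M_2$ and both sides lie in $\mathfrak M_2$, one concludes $\alpha^{\sharp}\circ\beta=\Phi$.

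The cyclicity of $\Omega_{\Phi^\sharp}$ for $\mathfrak R$ has already been noted in the paragraph preceding the statement, so the main obstacle is the separating property. The idea is a bootstrapping argument using the adjoints just constructed. If $T\in\mathfrak R$ satisfies $T\Omega_{\Phi^\sharp}=0$, then $\omega(T^{\ast}T)=0$; since $\alpha^{\sharp}(T^{\ast}T)\geq 0$ in $\mathfrak M_2$ and $\varphi_2\circ\alpha^{\sharp}=\omega$, faithfulness of $\varphi_2$ forces $\Lambda_{\Phi^\sharp}^{\ast}T^{\ast}T\Lambda_{\Phi^\sharp}=0$, hence $T\Lambda_{\Phi^\sharp}=0$; symmetrically $TV_{\Phi^\sharp}=0$. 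Now replace $T$ by $T\sigma_{\Phi^\sharp}(A_2)\in\mathfrak R$: because $T\sigma_{\Phi^\sharp}(A_2)\Omega_{\Phi^\sharp}=T\Lambda_{\Phi^\sharp}A_2\Omega_2=0$, rerunning the same argument yields $T\sigma_{\Phi^\sharp}(A_2)V_{\Phi^\sharp}=0$, that is $T(A_2\overline{\otimes}_{\Phi^\sharp}h)=0$ for every $A_2\in\mathfrak M_2$ and $h\in\mathcal H_1$. These vectors are total in $\mathcal L_{\Phi^\sharp}$, whence $T=0$, establishing the separating property and completing the proof.
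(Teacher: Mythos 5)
Your proposal is correct and follows essentially the same route as the paper: the factorization uses the same pair $\alpha=\sigma_{\Phi^\sharp}$ and $\beta(A_1)=\widehat{J}^{\ast}\tau_{\Phi^\sharp}(J_1A_1J_1)\widehat{J}$ with adjoints $\Lambda_{\Phi^\sharp}^{\ast}\,\cdot\,\Lambda_{\Phi^\sharp}$ and $V_{\Phi^\sharp}^{\ast}\,\cdot\,V_{\Phi^\sharp}$ (made possible by (\ref{cond1})), and the separating property is obtained by the same bootstrap ($R\Omega_{\Phi^\sharp}=0\Rightarrow R\Lambda_{\Phi^\sharp}=RV_{\Phi^\sharp}=0$, then repeat for $R\sigma_{\Phi^\sharp}(A_2)$ and use totality of the vectors $A_2\overline{\otimes}_{\Phi^\sharp}h$). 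The only differences are cosmetic: you verify $\Phi=\alpha^{\sharp}\circ\beta$ by a direct vector computation via $\beta(A_1)\Omega_{\Phi^\sharp}=V_{\Phi^\sharp}A_1\Omega_1$, whereas the paper checks $\beta^{\sharp}(\sigma_{\Phi^\sharp}(A_2))=\Phi^{\sharp}(A_2)$ and passes to adjoints.
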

\begin{proof}
We prove that the vector $\Omega_{\Phi^\sharp}$ is separating for $\mathfrak R$.
\\
In fact, if $R\in\mathfrak R$ with $R\Omega_{\Phi^\sharp}=0$, then we can write that $\Lambda_{\Phi^\sharp} R^* R \Lambda_{\Phi^\sharp}\Omega_2=0$ and from relationships (\ref{cond1}) we have that $\Lambda_{\Phi^\sharp} R^* R \Lambda_{\Phi^\sharp}\in\mathfrak M_2$  with $(\mathfrak M_2, \Omega_2)$ standard in $\mathcal H_2$. It follows that  $\Lambda_{\Phi^\sharp} R^* R \Lambda_{\Phi^\sharp}=0$  hence $R \Lambda_{\Phi^\sharp}=0$. 
\\
In similar way we obtain that $R V_{\Phi^\sharp}=0$.
\\
For each $A_2\in\mathfrak M_2$ we have:
$$R \Lambda_{\Phi^\sharp} A_2 \Omega_2= R \sigma_{\Phi^\sharp}(A_2)\Omega_{\Phi^\sharp}=0$$
and repeating the reasoning for $R \sigma_{\Phi^\sharp}(A_2)\in \mathfrak R$ we obtain:
$$ R \sigma_{\Phi^\sharp}(A_2) \Lambda_{\Phi^\sharp}=0 \quad \text{and}  \quad R\sigma_{\Phi^\sharp}(A_2) V_{\Phi^\sharp}=0 $$
hence for each $A_1 \in \mathfrak M_1$ result 
$$R \sigma_{\Phi^\sharp}(A_2) V_{\Phi^\sharp} J_1 A_1 \Omega_1 = R \sigma_{\Phi^\sharp}(A_2) \beta(A_1) \Omega_{\Phi^\sharp}=0 $$ 
in other words $R A_2 \overline{\otimes}_{\Phi^\sharp } A_1 \Omega_1 =0 $ for all $A_2\in\mathfrak M_2$ and $A_1 \in\mathfrak M_1$.
\\
We consider  $\sigma_{\Phi^\sharp}:(\mathfrak M_2, \Omega_2) \rightarrow (\mathfrak R, \Omega_{\Phi^\sharp})$, we have 
$$\sigma_{\Phi^\sharp}^\sharp (T)= \Lambda_{\Phi^\sharp}^* T \Lambda_{\Phi^\sharp}\in\mathfrak M_2$$
for all  $T\in \mathfrak R$, since 
 \begin{eqnarray*}
\langle \Omega_2, A_2  \Lambda_{\Phi^\sharp}^* T \Lambda_{\Phi^\sharp} \Omega_2
 \rangle = \langle \Lambda_{\Phi^\sharp} A_2^* \Omega_2 ,T \Omega_{\Phi^\sharp} \rangle = 
\langle \Omega_{\Phi^\sharp}, \sigma_{\Phi^\sharp}(A_2) T \Omega_{\Phi^\sharp} \rangle 
\end{eqnarray*}
\\
We can define an another stochastic map $ \beta :(\mathfrak M_1, \Omega_1) \rightarrow (\mathfrak R, \Omega_{\Phi^\sharp})$ as
$$ \beta(A_1) = \widehat{J} ^* \tau_{\Phi^\sharp}(J_1 A_1 J_1)  \widehat{J}$$
for all $A_1\in\mathfrak M_1$, with 
$$\beta^\sharp (T)= V_{\Phi^\sharp}^* T V_{\Phi^\sharp}\in\mathfrak M_1$$
for all $T\in \mathfrak R $.
\\
Indeed
 \begin{eqnarray*}
\langle \Omega_{\Phi^\sharp}, \beta (A_1) T \Omega_{\Phi^\sharp} \rangle & = &
 \langle \Omega_{\Phi^\sharp}, \widehat{J}^* \tau_{\Phi^\sharp} (J_1 A_1 J_1) \widehat{J} T \Omega_{\Phi^\sharp} \rangle=
  \langle \widehat{J}^* \tau_{\Phi^\sharp} (J_1 A_1^* J_1)\Omega_{\Phi^\sharp},  T \Omega_{\Phi^\sharp} \rangle=
\\
& = &
\langle  \widehat{J}^* V_{\Phi^\sharp} J_1 A_1^* J_1 \Omega_1,  T \Omega_{\Phi^\sharp} \rangle=
\langle  V_{\Phi^\sharp}   A_1^* \Omega_1,  T \Omega_{\Phi^\sharp} \rangle=
\\
& = &
\langle  \Omega_1,   A_1 V_{\Phi^\sharp}^*T \Omega_{\Phi^\sharp} \rangle=
\langle \Omega_1, A_1\beta^\sharp (T) \Omega_1  \rangle
\end{eqnarray*}
Furthermore, we have: 
$$ \beta^\sharp(\sigma_{\Phi^\sharp}(A_2))=\Phi^\sharp(A_2) $$
for all $A_2\in\mathfrak M_2 $, hence $\Phi= \sigma_{\Phi^\sharp}^\sharp \circ \beta$.
\end{proof}
We observe that if $\widehat{J} \Lambda_{\Phi^\sharp}=\Lambda_{\Phi^\sharp} J_2 $ and $ \sigma_{\Phi^\sharp}(\mathfrak M_2) \subset \beta(\mathfrak M_1)'$ then the  relationships (\ref{cond1}) are satisfying, since $\mathfrak R$
is generated by set of elements
$$ \left\{ \sigma_{\Phi^\sharp}(A_2) \cdot  \beta(Y_1) : A_2\in\mathfrak M_2 \quad Y_1\in\mathfrak M_1   \right\}$$
and by relationships (\ref{st1}), (\ref{st2}) and (\ref{st5}) we have:
\begin{eqnarray*}
\Lambda_{\Phi^\sharp}^* \sigma_{\Phi^\sharp}(A_2) \widehat{J}^* \tau_{\Phi^\sharp}(Y_1) \widehat{J} \Lambda_{\Phi^\sharp} & = &
\Lambda_{\Phi^\sharp}^* \sigma_{\Phi^\sharp}(A_2) \Lambda_{\Phi^\sharp} \Lambda_{\Phi^\sharp}^*\widehat{J}^* \tau_{\Phi^\sharp}(Y_1) \widehat{J} \Lambda_{\Phi^\sharp}=
\\
 & = &
\Lambda_{\Phi^\sharp}^* \sigma_{\Phi^\sharp}( A_2)  \Lambda_{\Phi^\sharp} J_2 \Lambda_{\Phi^\sharp}^* \tau_{\Phi^\sharp}(Y_1) \Lambda_{\Phi^\sharp} J_2=
\\
 & = &
A_2 \Phi(J_1 Y_1 J_1) \in\mathfrak M_2
\end{eqnarray*}
for all $ A_2 \in\mathfrak M_2$ and $Y_1\in\mathfrak M_1'$.
\\
In similar way we have:
\begin{eqnarray*}
V_{\Phi^\sharp}^* \sigma_{\Phi^\sharp}(A_2) \widehat{J}^* \tau_{\Phi^\sharp}(Y_1) \widehat{J} V_{\Phi^\sharp} = \Phi^\sharp(A_2) J_1 Y_1 J_1 \in\mathfrak M_1
\end{eqnarray*}
We see some applications of the previous proposition.
\\

\textbf{Factorization in Abelian case}
\\
If $\Phi:(\mathfrak M_1,\Omega_1)\rightarrow (\mathfrak M_2,\Omega_2)$ is a preserving Markov operator between commutative probability spaces then is factorizable.
\\
Indeed, we consider the anti-unitary operator $W:\mathcal L_{\Phi^\sharp } \rightarrow \mathcal L_{\Phi'}$ defined in (\ref{antiuniw}) and the homomorphism $\beta(A_1)= W ^* \tau_{\Phi^\sharp}(J_1 A_1 J_1) W$ for all $A_1\in\mathfrak M_1$.
\\
In abelian case, because our von Neumann algebras are in standard form, we have that $\mathfrak M_i = \mathfrak M_i'$ for $i=1,2$ and $\Phi^\sharp = \Phi'$ with
$$W ^* \tau_{\Phi^\sharp}(\mathfrak M_1') W = \tau_{\Phi^\sharp}(\mathfrak M_1')= \tau_{\Phi^\sharp}(\mathfrak M_1)$$
since  $\tau_{\Phi^\sharp} = \tau_{\Phi'}$.
\\
We rematk that the anti-unitary $W$ is an involution \textit{i.e.} $W^2=1$.
\\
Hence, the von Neumann algebra $\mathfrak R $ is generated by algebra $\sigma_{\Phi^\sharp}(\mathfrak M_2)$ and  $\tau_{\Phi^\sharp}(\mathfrak M_1')$. From relationship (\ref{stdual}) and of the previous remark, we have that $\Omega_{\Phi^\sharp}$ is a cyclic and separable vector for $\mathfrak R$ and the pair $(\sigma_{\Phi^\sharp} , \beta)$ is a minimal factorization of $\Phi$.
\\

\textbf{Deterministic case}
\\
%
We consider again the deterministic  case, we proof that there is a anti-unitari operator $\widehat{J}$ which satisfies the relationship (\ref{antiunij}) and
$$\widehat{J} ^* \tau_{\Phi^\sharp}(\mathfrak M_1') \widehat{J} \subset\sigma_{\Phi^\sharp}(\mathfrak M_2)$$
in other words that $\mathfrak R  = \sigma_{\Phi^\sharp}(\mathfrak M_2) $.
\\
Because $\Omega_{\Phi^\sharp}$ is a cyclic vector for $\sigma_{\Phi^\sharp}(\mathfrak M_2)'$ we can consider the following anti-linear map
\begin{equation}
 \widehat{J} \ T' \Omega_{\Phi^\sharp} := J_2 \Lambda_{\Phi^\sharp}^* T' \Lambda_{\Phi^\sharp} J_2 \ \overline{\otimes}_{\Phi^\sharp} \Omega_1 \qquad T'\in\sigma_{\Phi^\sharp}(\mathfrak M_2)'
\end{equation}
We remark that 
$$\Lambda_{\Phi^\sharp}^* \sigma_\Phi(\mathfrak M_2)' \Lambda_{\Phi^\sharp}\subset (\Lambda_{\Phi^\sharp}^* \sigma_\Phi(\mathfrak M_2) \Lambda_{\Phi^\sharp})'=\mathfrak M_2 '$$
because $\Lambda_{\Phi^\sharp} \Lambda_{\Phi^\sharp}^*\in \sigma_{\Phi^\sharp}(\mathfrak M_2)'$.
\\
Furthermore, we have for any $A_2 \in\mathfrak M_2$ and $ h_1\in \mathcal H_1$ that 
$$\widehat{J}^* A_2  \overline{\otimes}_{\Phi^\sharp} h_1 = \Lambda_{\Phi^\sharp} J_2 A_2 U_\Phi h_1$$
since for any $T'\in\sigma_{\Phi^\sharp}(\mathfrak M_2)'$ and $A_2\in\mathfrak M_2$, $h_1\in \mathcal H_1$ we have
\begin{align*}  
\left\langle  \widehat{J}^* A_2 \overline{\otimes}_{\Phi^\sharp } h_1 , T' \Omega_{\Phi^ \sharp}\right\rangle & = 
\left\langle  \widehat{J}T' \Omega_{\Phi ^\sharp}  , A_2 \overline{\otimes}_{\Phi^ \sharp } h_1  \right\rangle =
\left\langle  J_2 \Lambda_{\Phi^\sharp}^* T' \Lambda_{\Phi^\sharp} J_2\overline{\otimes}_{\Phi^\sharp} \Omega_1 ,A_2 \overline{\otimes}_{\Phi^ \sharp } h_1  \right\rangle =
\\
& = 
\left\langle \Omega_1, \Phi^\sharp(J_2 \Lambda_{\Phi^\sharp}^* T'^* \Lambda_{\Phi^\sharp} J_2 A_2) h_1  \right\rangle =
\left\langle U^*_\Phi A_2^* J_2 \Lambda_{\Phi^\sharp}^* T' \Omega_{\Phi^\sharp}, h_1  \right\rangle =
\\
& =
\left\langle  \Lambda_{\Phi^\sharp} J_2 A_2 U_\Phi h_1, T' \Omega_{\Phi^\sharp} \right\rangle 
\end{align*} 
Moreover $ \widehat{J}^* \widehat{J}=\Lambda_{\Phi^\sharp} \Lambda_{\Phi^\sharp}^*$ and since $\Phi$ is a multiplicative map we have $\Lambda_{\Phi^\sharp} \Lambda_{\Phi^\sharp}^*=I$.
\\
Since $\Phi$ is a multiplicative map we have $ U_\Phi^*  A_2 U_\Phi= \Phi^\sharp(  A_2 )$ for all $A_2\in\mathfrak M_2$ and
\begin{align*}  
\left\langle  \widehat{J}^* A_2 \overline{\otimes}_{\Phi^\sharp } h_1 , \widehat{J}^* B_2 \overline{\otimes}_{\Phi^\sharp } k_1 \right\rangle & =
\left\langle \Lambda_{\Phi^\sharp} J_2 A_2 U_\Phi h_1 , \Lambda_{\Phi^\sharp} J_2 B_2 U_\Phi k_1\right\rangle =
\left\langle B_2 U_\Phi k_1 , A_2 U_\Phi h_1\right\rangle =
\\
& =
\left\langle k_1 , U_\Phi^* B_2^* A_2 U_\Phi h_1 \right\rangle =
\left\langle  k_1 , \Phi^\sharp( B_2^* A_2 ) h_1 \right\rangle =
\\
& =
\left\langle B_2 \overline{\otimes}_{\Phi^\sharp } K_1  , A_2 \overline{\otimes}_{\Phi^\sharp } h_1 \right\rangle 
\end{align*}
It follows that $ \widehat{J} \widehat{J}^*=I $.
\\
We observe that for any $R', T'\in\sigma_{\Phi^\sharp}(\mathfrak M_2)'$ and $A_1\in\mathfrak M_1$ we have
\begin{align*}  
\left\langle  R' \Omega_{\Phi^\sharp} \ , \ \widehat{J}^* \tau_{\Phi^\sharp} (J_1 A_1 J_1) \widehat{J} T' \Omega_{\Phi^\sharp } \right\rangle & =
\left\langle \tau_{\Phi^\sharp} (J_1 A_1 J_1) \widehat{J}^* T' \Omega_{\Phi^\sharp }  \ , \ \widehat{J} R' \Omega_{\Phi^\sharp} \right\rangle =
\\
& =
\left\langle J_2  \Lambda_{\Phi^\sharp }^* T' \Lambda_{\Phi^\sharp }J_2 \overline{\otimes}_{\Phi^\sharp } J_1 A_1 \Omega_1 \ , \ J_2  \Lambda_{\Phi^\sharp }^* R'\Lambda_{\Phi^\sharp }J_2  \overline{\otimes}_{\Phi^\sharp } \Omega_1\right\rangle =
\\
& =
\left\langle J_1 A_1 \Omega_1 \ , \  \Phi^\sharp (J_2  \Lambda_{\Phi^\sharp }^* T'^* \Lambda_{\Phi^\sharp }  \Lambda_{\Phi^\sharp }^* R' \Lambda_{\Phi^\sharp }J_2) \Omega_1 \right\rangle =
\\
& =
\left\langle  J_1 A_1 \Omega_1 \ ,U_\Phi^* J_2  \Lambda_{\Phi^\sharp }^* T'^* \Lambda_{\Phi^\sharp }  \Lambda_{\Phi^\sharp }^* R'\Omega_{\Phi^\sharp } \right\rangle =
\\
& =
\left\langle J_2  U_\Phi A_1\Omega_1 \ , J_2  \Lambda_{\Phi^\sharp }^* T'^* \Lambda_{\Phi^\sharp }  \Lambda_{\Phi^\sharp }^* R'\Omega_{\Phi^\sharp } \right\rangle =
\\
& =
\left\langle\Lambda_{\Phi^\sharp }^* T'^* \Lambda_{\Phi^\sharp }  \Lambda_{\Phi^\sharp }^* R'\Omega_{\Phi^\sharp } \ ,\Phi (A_1) \Omega_2  \right\rangle =
\\
& =
\left\langle \Lambda_{\Phi^\sharp }  \Lambda_{\Phi^\sharp }^* R'\Omega_{\Phi^\sharp } \ , T' \sigma_{\Phi^\sharp }(\Phi (A_1) \Omega_{\Phi^\sharp }) \right\rangle =
\\
& =
\left\langle R'\Omega_{\Phi^\sharp } \ , \Lambda_{\Phi^\sharp } \Lambda_{\Phi^\sharp }^* \sigma_{\Phi^\sharp }(\Phi (A_1) T' \Omega_{\Phi^\sharp }) \right\rangle
\end{align*}
Because $\Omega_{\Phi^\sharp }$ is cyclic for $\sigma_{\Phi^\sharp }(\mathfrak M_2)'$ we can write 
$$\widehat{J}^* \tau_{\Phi^\sharp} (J_1 A_1 J_1) \widehat{J}=\Lambda_{\Phi^\sharp } \Lambda_{\Phi^\sharp }^* \sigma_{\Phi^\sharp }(\Phi (A_1)=\sigma_{\Phi^\sharp }(\Phi (A_1)$$
thus
$$\widehat{J} ^* \tau_{\Phi^\sharp}(\mathfrak M_1') \widehat{J} \subset\sigma_{\Phi^\sharp}(\mathfrak M_2)$$
Moreover for any $ A_2 \overline{\otimes}_{\Phi^\sharp } h_1 \in\mathcal L_{\Phi^\sharp }$ we have
$$\Lambda_{\Phi^\sharp } ^* \widehat{J} ^*  A_2 \overline{\otimes}_{\Phi^\sharp } h_1 = J_2 A_2 U_\Phi h_1= J_2 \Lambda_{\Phi^\sharp } ^* A_2 \overline{\otimes}_{\Phi^\sharp } h_1$$
and
\begin{align*}
V_{\Phi^\sharp }^* \widehat{J} ^*  A_2 \overline{\otimes}_{\Phi^\sharp } h_1 & =  V_{\Phi^\sharp }^*  \Lambda_{\Phi^\sharp }  J_2 A_2 U_\Phi h_1 =
J_1 \Phi^\sharp ( A_2)  h_1 =
\\
&=
J_1 V_{\Phi^\sharp }^* A_2 \overline{\otimes}_{\Phi^\sharp } h_1
\end{align*}
We observe that the anti-unitary operator $\widehat{J}$ is an involution since:
\begin{align*}  
\left\langle  \widehat{J}^* A_2 \overline{\otimes}_{\Phi^\sharp } h_1 \ ,  \ \widehat{J} A_2 \overline{\otimes}_{\Phi^\sharp } h_1 \right\rangle & =
\left\langle  \Lambda J_2 A_2 U_\Phi h_1 \ , \ \widehat{J} A_2 \overline{\otimes}_{\Phi^\sharp } h_1  \right\rangle =
\\
& =
\left\langle  \Lambda J_2 A_2 U_\Phi h_1 \ , \ \widehat{J} \Lambda A_2 U_\Phi h_1 \right\rangle=
\\
& =
\left\langle  \Lambda J_2 A_2 U_\Phi h_1 \ , \ \Lambda  J_2 A_2 U_\Phi h_1 \right\rangle
\end{align*}
since $A_2 \overline{\otimes}_{\Phi^\sharp } h_1=\Lambda_{\Phi^\sharp} \Lambda_{\Phi^\sharp}^* A_2 \overline{\otimes}_{\Phi^\sharp } h_1=\Lambda_{\Phi^\sharp} A_2 U_\Phi h_1$.

\section{Factorization and generalized conditional expectation}
We recall briefly the notion of generalized conditional expectation of Accardi and Cecchini \cite{Accardi-Cecchini}.
\\ 
Let $(\mathfrak M, \varphi $ be a probability space and $\mathfrak R$ a von Neumann algebra with $i:\mathfrak M \rightarrow \mathfrak R$ a injective homomorphism between von Neumann algebras.
\\
We set with  the space of normal $\mathfrak S_\varphi$ the set:
$$ \mathfrak S_\varphi= \left\{ \omega\in\mathfrak R_* : \omega(i(a))=\varphi(a) \quad \text{for all} \ a\in\mathfrak M \right\} $$
where $\mathfrak R_*$ the predual of $\mathfrak R$.
\\
Let $(\mathcal H_s ,  \pi_s, J_s, \mathcal P_s)$  be a standard representation of algebra of von Neumann $\mathfrak R $ \cite{Haag}, it is widely know that 
there is a unique $\xi_\omega\in\mathcal P_s $ such that 
$$ \omega(X) = \left\langle  \xi_\omega \pi_s (X) \xi_\omega \right\rangle \quad \text{for all} \  X\in\mathfrak R $$
We define the following isometry $\nabla_\omega: \mathcal H_\varphi \rightarrow \mathcal H_s $:
$$ \nabla_\omega \pi_\varphi(a)\Omega_\varphi = \pi_s(i(a)) \xi_\omega \quad \text{for all} \ a\in\mathfrak M$$
and we obtain (see \cite{Accardi-Cecchini}) a unital completely positive map $\mathcal E_\omega : \mathfrak R \rightarrow \mathfrak M$ such that
$$\pi_\varphi(\mathcal E_\omega (X)) = J_\varphi \nabla_\omega^* \pi_s(X) J_s \nabla_\omega J_\varphi \qquad \text{for all} \  X\in\mathfrak R $$ 
Furthermore  $\mathcal E_\omega(i(a))=a$ for all $a\in\mathfrak M$ if, and only if $J_s \nabla_\omega= \nabla_\omega J_\varphi$.
\\

%
We consider again the preserving Markov Operator $\Phi:(\mathfrak M_1,\Omega_1)\rightarrow(\mathfrak M_2,\Omega_2)$ and the Stinespring representations $(\mathcal L_{\Phi^\sharp}, \sigma_{\Phi^\sharp}, V_{\Phi^\sharp})$  and $(\mathcal L_{\Phi^\sharp}, \tau_{\Phi^\sharp}, \Lambda_{\Phi^\sharp})$ related to adjoint map $\Phi^\sharp$.
\\
Let $\widehat J : \mathcal L_{\Phi^\sharp }  \rightarrow \mathcal L_{\Phi^\sharp } $ be an anti-unitary operator with the property (\ref{antiunij} ) and we set with $\mathfrak R$ the von Neumann algebra generated by $\sigma_{\Phi^\sharp}(\mathfrak M_2)$ and $\widehat{J} \tau_{\Phi^\sharp}(\mathfrak M_1') \widehat{J} $.
\\
Moreover, let $(\pi_s, \mathcal H_s , J_s, \mathcal P_s)$ be the standard representation of $\mathfrak R$, we define the following isometries $\nabla_i : \mathcal H_i \rightarrow \mathcal L_{\Phi^\sharp}$ as
\begin{align*}
 & \nabla_1 A_1 \Omega_1 = \pi_s(\beta(A_1)) \Omega_s \qquad  \quad A_1 \in \mathfrak M_1
\\
&
\nabla_2 A_2 \Omega_2 = \pi_s(\sigma_{\Phi^\sharp}(A_2)) \Omega_s \qquad  A_2 \in \mathfrak M_2 
\end{align*}
where $\beta(A_1)= \widehat{J} ^* \tau_{\Phi^\sharp}(J_1 A_1 J_1) \widehat{J}$ for all $A_1\in\mathfrak M_1$.
\\
We have a generalized conditional expectations $\mathcal E_i :\mathfrak R \rightarrow \mathfrak M_i$ with $i=1,2$ such that for each $R\in\mathfrak R$
\begin{equation} 
\label{GCE}
\mathcal E_i(R)= J_i \nabla_i^* J_s \pi_s(R) J_s \nabla_i J_i
\end{equation}
Furthermore  we have $\mathcal E_1(\beta(A_1))=A_1$ and $\mathcal E_1(\sigma_{\Phi^\sharp}(A_2))=A_2$ for all $A_i\in\mathfrak M_i$ with $i=1,2$  if, and only if 
\begin{equation}
\label{CCE}
J_s \nabla_i = \nabla_i J_i
\end{equation}
The vector $\Omega_{\Phi^\sharp}$ is cyclic for $\mathfrak R$ and we have the following:
\begin{proposition}
If the relationships (\ref{CCE}) is hold then $\Omega_{\Phi^\sharp}$ is a separating vector for $\mathfrak R$.
\\
Furthermore we have 
\begin{equation}
\left\langle  \Omega_{\Phi^\sharp} , R \sigma_{\Phi^\sharp} (A_2) \right\rangle =
\left\langle  \Omega_2 , \mathcal E_2 (R) A_2 \Omega_2 \right\rangle \qquad R\in\mathfrak R \quad A_2\in\mathfrak M_2
\end{equation}
and
\begin{equation}
\left\langle  \Omega_{\Phi^\sharp} , R \beta (A_1) \right\rangle =
\left\langle  \Omega_1 , \mathcal E_1 (R) A_1 \Omega_1 \right\rangle 
 \qquad R\in\mathfrak R \quad A_1\in\mathfrak M_1
\end{equation}
in other words $\mathcal E_1$ and $\mathcal E_2$ are adjoints maps of $\beta$ and $\alpha$ respectively.
\end{proposition}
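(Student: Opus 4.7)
The plan is to first exploit the compatibility condition (\ref{CCE}) to reduce the generalised conditional expectation (\ref{GCE}) to the Stinespring-type form $\mathcal{E}_i(R)=\nabla_i^{*}\,\pi_s(R)\,\nabla_i$, then derive the two adjoint identities by a direct computation, and finally deduce that $\Omega_{\Phi^\sharp}$ is separating for $\mathfrak R$ by an iteration on words in the generators.

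For the simplification of $\mathcal{E}_i$, note that (\ref{CCE}) together with $J_i^{2}=1$ gives $J_s\nabla_i J_i=\nabla_i$, while taking the anti-linear adjoint yields $\nabla_i^{*}J_s=J_i\nabla_i^{*}$; inserting both relations into (\ref{GCE}) collapses the outer $J_i$'s against the inner $J_s$'s. Writing $\omega(R):=\langle\Omega_{\Phi^\sharp},R\,\Omega_{\Phi^\sharp}\rangle$ for the vector state and $\xi_\omega\in\mathcal{P}_s$ for its positive-cone representative, so that $\nabla_2 A_2\Omega_2=\pi_s(\sigma_{\Phi^\sharp}(A_2))\,\xi_\omega$, a direct calculation gives
\[
\langle\Omega_2,\mathcal{E}_2(R)\,A_2\Omega_2\rangle
=\langle\xi_\omega,\pi_s\bigl(R\,\sigma_{\Phi^\sharp}(A_2)\bigr)\xi_\omega\rangle
=\omega\bigl(R\,\sigma_{\Phi^\sharp}(A_2)\bigr)
=\langle\Omega_{\Phi^\sharp},R\,\sigma_{\Phi^\sharp}(A_2)\,\Omega_{\Phi^\sharp}\rangle,
\]
which is the first adjoint identity of the statement; the second follows verbatim with $\nabla_1$ and $\beta$ in place of $\nabla_2$ and $\sigma_{\Phi^\sharp}$. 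Once $\Omega_{\Phi^\sharp}$ is known to be separating, these are precisely $\mathcal{E}_2=\sigma_{\Phi^\sharp}^{\sharp}$ and $\mathcal{E}_1=\beta^{\sharp}$.

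For the separating property, take $R\in\mathfrak R$ with $R\,\Omega_{\Phi^\sharp}=0$. Applying the first identity to $R^{*}R$ with $A_2=\mathbf{1}$ and using faithfulness of $\varphi_2$ forces $\mathcal{E}_2(R^{*}R)=0$; the Stinespring factorisation $\mathcal{E}_2(R^{*}R)=(\pi_s(R)\nabla_2)^{*}(\pi_s(R)\nabla_2)$ then gives $\pi_s(R)\nabla_2=0$, i.e.\ $\pi_s\bigl(R\,\sigma_{\Phi^\sharp}(A_2)\bigr)\xi_\omega=0$ for every $A_2\in\mathfrak M_2$. Since $\omega(S^{*}S)=\|S\,\Omega_{\Phi^\sharp}\|^{2}=\|\pi_s(S)\xi_\omega\|^{2}$, this translates back to $R\,\sigma_{\Phi^\sharp}(A_2)\,\Omega_{\Phi^\sharp}=0$. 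The symmetric argument using the second identity gives $R\,\beta(A_1)\,\Omega_{\Phi^\sharp}=0$. Iterating with $RT$ in place of $R$ then yields $R\,T_1T_2\cdots T_n\,\Omega_{\Phi^\sharp}=0$ for every finite word in $\sigma_{\Phi^\sharp}(\mathfrak M_2)\cup\beta(\mathfrak M_1)$; by strong density of the generated $*$-algebra in $\mathfrak R$ and cyclicity of $\Omega_{\Phi^\sharp}$, the resulting vectors are norm-dense in $\mathcal{L}_{\Phi^\sharp}$, and boundedness of $R$ forces $R=0$.

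The main obstacle is closing the iteration in the last step: each enlargement of the word requires that the new operator $RT$ still annihilates $\Omega_{\Phi^\sharp}$ before the Stinespring factorisation can be reapplied. The identity $\omega(S^{*}S)=\|\pi_s(S)\xi_\omega\|^{2}$ is exactly what lets one translate vanishing of $\pi_s(R)\nabla_i$ back into vanishing of $RT\,\Omega_{\Phi^\sharp}$, so the inductive step is simply one more application of the Step~3 reasoning and the induction closes without any additional hypothesis beyond (\ref{CCE}).
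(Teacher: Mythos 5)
Your proposal is correct and follows essentially the same route as the paper: condition (\ref{CCE}) collapses (\ref{GCE}) to $\mathcal E_i(\cdot)=\nabla_i^*\pi_s(\cdot)\nabla_i$, the two adjoint identities come from the same computation through $\xi_\omega$, and the separating property is obtained by propagating $R\Omega_{\Phi^\sharp}=0$ through the conditional expectations to get $\pi_s(R)\nabla_i=0$ and then annihilation of a total set of vectors. The only cosmetic difference is at the end: the paper stops after two iterations because the vectors $\sigma_{\Phi^\sharp}(A_2)\beta(A_1)\Omega_{\Phi^\sharp}=A_2\overline{\otimes}_{\Phi^\sharp}A_1\Omega_1$ are already total in $\mathcal L_{\Phi^\sharp}$, whereas you iterate over arbitrary words in the generators and invoke strong density of the generated $*$-algebra together with cyclicity of $\Omega_{\Phi^\sharp}$, which is equally valid.
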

\begin{proof}
The proof of separating property is similar the previous proposition . 
Indeed, let $R$ belongs to $\mathfrak R$ such that $R\Omega_{\Phi^\sharp}=0$. From (\ref{GCE}) we have that $\nabla_i^* R^* R \nabla_i \Omega_i =0$ for all $i=1,2$.
\\
It follows that $ R \nabla_i=0$ for all $i=1,2$. 
\\
For each $A_2\in\mathfrak M_2$ we obtain:
$$R \nabla_2 A_2 \Omega_2= R \sigma_{\Phi^\sharp}(A_2)\Omega_{\Phi^\sharp}=0$$
and repeating the argument for $R \sigma_{\Phi^\sharp}(A_2)\in \mathfrak R$ we obtain:
$$ R \sigma_{\Phi^\sharp}(A_2) \nabla_i=0 $$
and for each $A_1 \in \mathfrak M_1$ we have 
$$R \sigma_{\Phi^\sharp}(A_2) \nabla_1 A_1 \Omega_1= R \sigma_{\Phi^\sharp}(A_2) \beta(A_1) \Omega_{\Phi^\sharp}=0 $$ 
hence $R A_2 \overline{\otimes}_{\Phi^\sharp } A_1 \Omega_1 =0 $ for all $A_i\in\mathfrak M_i$ with $i=1,2$.
\\
We have for all $R\in\mathfrak R $ and $A_i\in\mathfrak M_i \quad i=1,2$
\begin{eqnarray*}
\left\langle  \Omega_2 , \mathcal E_2 (R) A_2 \Omega_2 \right\rangle =
\left\langle  \Omega_2 , \nabla_2^* R \nabla_2 A_2 \Omega_2 \right\rangle =
 \left\langle  \Omega_s , \pi_s(R \sigma_{\Phi^\sharp } (A_2) \Omega_s \right\rangle =
 \left\langle  \Omega_{\Phi^\sharp }  , R \sigma_{\Phi^\sharp } (A_2) \Omega_{\Phi^\sharp } \right\rangle
\end{eqnarray*}
while
\begin{eqnarray*}
\left\langle  \Omega_1 , \mathcal E_1 (R) A_1 \Omega_1 \right\rangle =
\left\langle  \Omega_1 , \nabla_1^* R \nabla_1 A_1 \Omega_1 \right\rangle =
 \left\langle  \Omega_s , \pi_s (R \beta (A_1) \Omega_s \right\rangle =
 \left\langle  \Omega_{\Phi^\sharp }  , R \beta(A_1) \Omega_{\Phi^\sharp } \right\rangle
\end{eqnarray*}
\end{proof}
We can give the following proposition:
\begin{corollary}
If the relationships (\ref{CCE}) is hold then $\Phi$ is a factorizable map.
\end{corollary}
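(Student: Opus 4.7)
The plan is to exhibit the factorization explicitly, taking as intermediate probability space $(\mathfrak{R}, \omega)$ with $\omega(R) = \langle \Omega_{\Phi^\sharp}, R \Omega_{\Phi^\sharp}\rangle$, and as deterministic maps $\alpha = \sigma_{\Phi^\sharp}:(\mathfrak{M}_2, \Omega_2) \to (\mathfrak{R}, \omega)$ together with $\beta(A_1) = \widehat{J}^* \tau_{\Phi^\sharp}(J_1 A_1 J_1)\widehat{J}$ defining $\beta:(\mathfrak{M}_1, \Omega_1) \to (\mathfrak{R}, \omega)$. The map $\sigma_{\Phi^\sharp}$ is manifestly a $*$-homomorphism by construction, and $\beta$ is a linear $*$-homomorphism because the two antilinearities (in $A_1 \mapsto J_1 A_1 J_1$ and in conjugation by $\widehat{J}$) cancel while the multiplicativity of $\tau_{\Phi^\sharp}$ together with $J_1^2 = 1$ yields $\beta(A_1 B_1) = \beta(A_1)\beta(B_1)$. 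Hence both maps are deterministic.

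The preceding proposition, applied under hypothesis (\ref{CCE}), does most of the remaining work: it shows that $\Omega_{\Phi^\sharp}$ is separating for $\mathfrak{R}$ (so $(\mathfrak{R}, \Omega_{\Phi^\sharp})$ is in standard form and $\omega$ is a faithful normal state), and it identifies $\mathcal{E}_2$ as the $(\varphi_2, \omega)$-adjoint of $\sigma_{\Phi^\sharp}$ and $\mathcal{E}_1$ as the $(\varphi_1, \omega)$-adjoint of $\beta$. Thus $\alpha$ and $\beta$ are deterministic preserving Markov operators, and the only remaining task is to verify the factorization identity $\Phi = \alpha^\sharp \circ \beta = \mathcal{E}_2 \circ \beta$.

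For this last step, I would first compute the action of $\beta(A_1)$ on $\Omega_{\Phi^\sharp}$. Using $\widehat{J}\Omega_{\Phi^\sharp} = \widehat{J} V_{\Phi^\sharp}\Omega_1 = V_{\Phi^\sharp} J_1\Omega_1 = \Omega_{\Phi^\sharp}$, then the intertwining of $\tau_{\Phi^\sharp}$ with $V_{\Phi^\sharp}$, and finally the identity $\widehat{J}^* V_{\Phi^\sharp} = V_{\Phi^\sharp} J_1$ (which follows from (\ref{antiunij})), I obtain
\[
\beta(A_1)\Omega_{\Phi^\sharp} = 1 \overline{\otimes}_{\Phi^\sharp} A_1 \Omega_1 .
\]
Then, by the adjoint formula of the preceding proposition applied with $R = \beta(A_1)$, and the definition of the semi-inner product on $\mathcal{L}_{\Phi^\sharp}$,
\[
\langle \Omega_2, \mathcal{E}_2(\beta(A_1)) A_2 \Omega_2\rangle = \langle \Omega_{\Phi^\sharp}, \beta(A_1)\sigma_{\Phi^\sharp}(A_2)\Omega_{\Phi^\sharp}\rangle = \langle \Omega_1, A_1 \Phi^\sharp(A_2)\Omega_1\rangle.
\]
Invoking the adjoint property for $(\Phi^\sharp, (\Phi^\sharp)^\sharp) = (\Phi^\sharp, \Phi)$, the right-hand side equals $\langle \Omega_2, \Phi(A_1) A_2 \Omega_2\rangle$. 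Since this holds for every $A_2 \in \mathfrak{M}_2$ and $\Omega_2$ is separating for $\mathfrak{M}_2$, we conclude $\mathcal{E}_2(\beta(A_1)) = \Phi(A_1)$, i.e.\ $\Phi = \alpha^\sharp \circ \beta$.

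The main obstacle is purely bookkeeping: one must keep track of the antilinearities of $J_1, J_2, \widehat{J}$ carefully when evaluating $\beta(A_1)\Omega_{\Phi^\sharp}$, so that no spurious conjugation sneaks into the scalar or operator arguments. Once that computation is done correctly, everything else is routine and reduces to the adjoint relation already proved in the previous proposition combined with the defining intertwining property of $(\Phi,\Phi^\sharp)$.
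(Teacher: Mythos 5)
Your proof is correct and takes essentially the same route as the paper: the paper verifies the adjoint identity $\beta^\sharp(\sigma_{\Phi^\sharp}(A_2))=\mathcal E_1(\sigma_{\Phi^\sharp}(A_2))=\Phi^\sharp(A_2)$ and reads off $\Phi=\sigma_{\Phi^\sharp}^\sharp\circ\beta$, while you verify the mirror identity $\mathcal E_2(\beta(A_1))=\Phi(A_1)$ directly, using the same ingredients (the adjoint relations of the preceding proposition, the computation $\beta(A_1)\Omega_{\Phi^\sharp}=1\overline{\otimes}_{\Phi^\sharp}A_1\Omega_1$ coming from (\ref{antiunij}), and the adjointness of the pair $(\Phi,\Phi^\sharp)$). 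This is the same argument read in the adjoint direction, so there is nothing to correct.
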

\begin{proof}
We have that $\Phi^\sharp (A_1)= \beta^\sharp (\alpha(A_1))$ for all $A_1 \in \mathfrak M_1$.
\\
Indeed
\begin{eqnarray*}
\left\langle  A_1 \Omega_1 , \beta^\sharp (\alpha(A_2)) \Omega_1 \right\rangle & = & 
\left\langle  A_1 \Omega_1 , \nabla_1^* \pi_s(\sigma_{\Phi^\sharp } (A_2)) \nabla_1 \Omega_1 \right\rangle =
 \left\langle  \pi_s (\beta(A_1)) \Omega_s , \pi_s (\sigma_{\Phi^\sharp } (A_2)) \Omega_s \right\rangle =
\\
& = &
 \left\langle  \beta(A_1) \Omega_{\Phi^\sharp } , \sigma_{\Phi^\sharp } (A_2) \Omega_{\Phi^\sharp } \right\rangle =
 \left\langle  1\overline{\otimes}_{\Phi^\sharp } A_1 \Omega_1,  A_2 \overline{\otimes}_{\Phi^\sharp } \Omega_1 \right\rangle =
\\
& = &
 \left\langle  A_1 \Omega_1, \Phi^\sharp  (A_2) \Omega_1 \right\rangle
\end{eqnarray*}
\end{proof}
We have an isometry $\Xi:\mathcal L_{\Phi^\sharp } \rightarrow \mathcal H_s$ such that
$$\Xi A_2 \overline{\otimes}_{\Phi^\sharp } A_1\Omega_1=\pi_s(\sigma_{\Phi^\sharp } (A_2)\beta(A_1))\xi_\omega$$
for all $A_i\in\mathfrak M_i$ with $i=1,2$, where
$$\left\langle \Omega_{\Phi^\sharp } , X \Omega_{\Phi^\sharp} \right\rangle=
\left\langle \xi_\omega , \pi_s (X)  \xi_\omega \right\rangle  \quad \text{for all} \quad  X\in\mathfrak R $$ 
Moreover 
\begin{align*}
\Xi   V_{\Phi^\sharp } = \nabla_1 \qquad and \qquad \Xi   \Lambda_{\Phi^\sharp } = \nabla_2
\end{align*}
and if the  (\ref{CCE}) is hold, we can write a relationship between the anti-unitary $\widehat J$ and the modular coniugation $J_s$: 
$$\widehat J V_{\Phi^\sharp }= \Xi^* J_s \Xi V_{\Phi^\sharp }$$
%
We observe that for each $R\in\mathfrak R$ we have:
\begin{align*}
 V_{\Phi^\sharp}^* R V_{\Phi^\sharp}= \nabla_1^* \pi_s (R) \nabla_1 \qquad \text{and} \qquad \Lambda_{\Phi^\sharp}^* R \Lambda_{\Phi^\sharp} = \nabla_2^* \pi_s (R) \nabla_2
\end{align*}
Indeed for each $A_i,B_i\in\mathfrak M_i$ with $i=1,2$ we can write:
\begin{align*}
\left\langle B_1 \Omega_1 , \nabla_1^* \pi_s (R)  \nabla_1 A_1 \Omega_1 \right\rangle & =
\left\langle \xi_\omega, \pi_s(\beta(B_1^*)) \pi_s(R) \pi_s(\beta(A_1)) \xi_\omega\right\rangle=
\\
& =
\left\langle \Omega_{\Phi^\sharp}, \beta(B_1^*) R\beta(A_1) \Omega_{\Phi^\sharp} \right\rangle=
\\
&=
\left\langle  \widehat J V_{\Phi^\sharp} J_1 B_1 \Omega_1 ,  R \widehat J V_{\Phi^\sharp} J_1 A_1 \Omega_1 \right\rangle=
\\
&=
\left\langle  V_{\Phi^\sharp} B_1 \Omega_1 ,  R V_{\Phi^\sharp} A_1 \Omega_1 \right\rangle=
\\
&=
\left\langle  B_1 \Omega_1 ,  V_{\Phi^\sharp}^*  R V_{\Phi^\sharp} A_1 \Omega_1 \right\rangle
\end{align*}
while
\begin{align*}
\left\langle B_2 \Omega_2 , \nabla_2^* \pi_s (R)  \nabla_2 A_2 \Omega_2 \right\rangle & =
\left\langle \xi_\omega, \pi_s(\sigma_{\Phi^\sharp}(B_2^*)) \pi_s(R) \pi_s(\sigma_{\Phi^\sharp}(A_2)) \xi_\omega\right\rangle=
\\
& =
\left\langle \Lambda_{\Phi^\sharp} B_2 \Omega_2,  R \Lambda_{\Phi^\sharp} A_2 \Omega_2 \right\rangle=
\\
&=
\left\langle   B_2 \Omega_2 ,  \Lambda_{\Phi^\sharp} ^* R \Lambda_{\Phi^\sharp} A_2 \Omega_2 \right\rangle
\end{align*}
We can give a simple remark:
\begin{remark}
We have that $J_s \nabla_i = \nabla_i J_i$ with $i=1,2$ if,  and only if
\begin{align*}
 \mathcal E_1(R)= V^* R V  \qquad \text{and} \qquad \mathcal E_2 (R)= \Lambda^* R \Lambda 
\end{align*}
for all $R\in \mathfrak R$.
\end{remark}

\section{Conclusion}
In this paper we have given a simple method to determine when a preserving Markov operator is factorizable. It is based on the appropriate selection of an anti-unitary operator $\widehat{J}$ on Hilbert space of Stinespring representation $\mathcal L_{\Phi^\sharp }$. 
\\
A useful tool to establish the anti-unitary operator $\widehat{J}$ be found in  \cite{Maje1} and  \cite{Maje2} since there is a strong connection between coniugation operator and antilinear Jordan map of von Neumann algebra. Indeed, for each antilinear Jordan map $\gamma:\sigma_{\Phi^\sharp }(\mathfrak M_2)' \rightarrow \sigma_{\Phi^\sharp }(\mathfrak M_2)'$ \textit{i.e.}
\begin{itemize}
\item [1.]  $\gamma$ is an antilinear bijection
\item [2.]  $\gamma \circ \gamma = i$  where $i$ is identy map
\item [3.]  $\gamma(T^*)=\gamma(A)^*$  for all $T\in\sigma_{\Phi^\sharp }(\mathfrak M_2)'$
\item [4.]  $\gamma(\left\{ T,S \right\})=\left\{ \gamma(T), \gamma(S)\right\} $ for all $T,S\in\sigma_{\Phi^\sharp }(\mathfrak M_2)'$
\end{itemize}
we can define a coniugation $\widehat{J}$ on Hilbert space $\mathcal L_{\Phi^\sharp }$  \cite{Maje1} as  
$\widehat{J} T \Omega_{\Phi^\sharp }= \gamma(T) \Omega_{\Phi^\sharp }$ 
 for all $T\in\sigma_{\Phi^\sharp }(\mathfrak M_2)'$ .
\\
We remark that for the relationship (\ref{antiunij}), this Jordan map  must necessarily satisfy the following property:
$$\gamma(\tau_{\Phi^\sharp }(y))\Omega_{\Phi^\sharp }=V_{\Phi^\sharp } J_1 y\Omega_1$$
for all $y\in\mathfrak M_1 '$.

\end{document}